\mathchardef\ordinarycolon\mathcode`\:
\newcommand{\N}{\mathbb N}
\newcommand{\R}{\mathbb R}
\newcommand{\C}{\mathbb C}
\newcommand{\D}{\mathcal{D}}
\renewcommand{\S}{\mathcal{S}}
\newcommand{\F}{\mathcal{F}}
\newcommand{\mR}{\mathcal{R}}
\newcommand{\Cr}{C_\mR}
\newcommand{\CR}{C_\mR}
\newcommand{\Sr}{\S_\mR}
\newcommand{\SR}{\S_\mR}
\newcommand{\mB}{\mathcal{B}}
\newcommand{\mC}{\mathfrak{C}}
\newcommand{\Dhr}{\hat{\D}_\mR}
\newcommand{\supnorm}[1]{\norm{ #1 }_\infty}
\newcommand{\norm}[1]{\left\| #1 \right\|}
\newcommand{\nrm}[2]{\left\|#1\right\|_{#2}}
\newcommand{\p}[2]{\langle #1 , #2 \rangle}
\newcommand{\dtzero}[1]{\left.\frac d{dt}#1\right|_{t=0}}
\newcommand{\dd}{\textnormal{d}}
\newcommand{\ran}{\operatorname{ran}}
\newcommand{\nul}{\operatorname{nul}}
\newcommand{\spn}{\textnormal{span}}
\newcommand{\Graff}{\textnormal{Graff}}
\newcommand{\QW}{Q_\hbar^W}
\newcommand{\QB}{Q_\hbar^B}
\newcommand{\QR}{\tilde{Q}_\hbar^W}
\newcommand{\dslash}[1]{\!\slashed{\textnormal{d}}#1\,}
\renewcommand{\mid}{~\middle|~}
\newcommand{\Exp}{\sqrt{\textnormal{Exp}}}	
\newcommand{\wexp}{\Exp}
\newcommand{\MLP}[3]
					{\mathcal{N}_\varphi^{#1}(#2\def\mycmd{#3}\if1\mycmd \else ,#3 \fi)}
\newtheorem{thm}{Theorem}[section]
\newtheorem{lem}[thm]{Lemma}
\newtheorem{prop}[thm]{Proposition}
\newtheorem{defn}[thm]{Definition}
\newtheorem{remark}[thm]{Remark}
\begin{document}

\title{Quantization and the Resolvent Algebra\footnote{This is an author generated post-print of:
T. D. H. van Nuland (2019). Quantization and the Resolvent Algebra. \textit{J. Funct. Anal.} \textbf{277}, issue 8, pages 2815--2838.\\ Incorporated in this post-print is the corrigendum submitted in 2024.}}

\author{Teun D.H. van Nuland}
\affil{E-mail: teunvn@gmail.com}
\date{}

\maketitle

\begin{abstract}
We introduce a novel commutative C*-algebra $C_\mathcal{R}(X)$ of functions on a symplectic vector space $(X,\sigma)$ admitting a complex structure, along with a strict deformation quantization that maps a dense subalgebra of $C_\mathcal{R}(X)$ to the resolvent algebra introduced by Buchholz and Grundling \cite{BG2008}. The associated quantization map is a field-theoretical Weyl quantization compatible with the work of Binz, Honegger and Rieckers \cite{BHR}. We also define a Berezin-type quantization map on all of $C_\mathcal{R}(X)$, which continuously and injectively maps it onto a dense subset of the resolvent algebra.

The commutative resolvent algebra $C_\mathcal{R}(X)$, generally defined on a real inner product space $X$, intimately depends on the finite dimensional subspaces of $X$. We thoroughly analyze the structure of this algebra in the finite dimensional case by giving a characterization of its elements and by computing its Gelfand spectrum.

\end{abstract}

\section{Introduction}
\label{intro}
The resolvent algebra is a C*-algebra modelling the canonical commutation relations. Just as the better known Weyl (C*-)algebra is characterized by the canonical commutation relations in exponentiated form, the resolvent algebra is characterized by the corresponding relations between resolvents. This simple change turns out to give the resolvent algebra a much richer structure, and makes it better suited for modelling dynamics, compared to the Weyl algebra. The resolvent algebra, introduced and thoroughly investigated by Buchholz and Grundling in \cite{BG2008}, appears to be useful for many aspects of quantum mechanics and quantum field theory, but has left us one important question. This question, posed by Buchholz in a personal communication, concerns the classical limit of the resolvent algebra, or, equivalently (at least within a C*-algebraic framework), its emergence from strict deformation quantization theory. We answer this question in this paper, in a way that seamlessly extends quantization in the setting of the compact operators to infinite dimensions.

To explain this, we will view the resolvent algebra in light of different quantization procedures, the first of which is the procedure introduced by Weyl. 
In a C*-algebraic framework \cite{KL98,Rieffel1993,Rieffel1994}, Weyl quantization typically starts from a dense subalgebra of $C_0(\R^{2n})$, like the Schwartz space $\S(\R^{2n})$, which is then mapped to a dense subalgebra of the compact operators $K(L^2(\R^n))$. The Weyl quantization of $f\in\S(\R^{2n})$ is the operator on $L^2(\R^n)$ defined by
\begin{align}\label{QW}
	\QW(f)=\int_{\R^{2n}}\frac{\textnormal{d}^{2n}x}{(2\pi)^n}\hat{f}(x)e^{i\phi(x)}\,,
\end{align}
where $\hbar\in\R$, $\phi(x)$ is a linear combination of position and momentum operators with coefficients $x_1,\ldots x_{2n}$, and $\hat{f}$ is the Fourier transform of $f$. (The precise definitions can be found in Section \ref{sct:Quantization}.) Motivated by quantum field theory, we also wish to quantize functions on an infinite dimensional phase space $X$. Because $\S(X)=0$ for infinite dimensional $X$, our suggestion is to replace $\S(X)$ by 
	$$\Sr(X):=\spn\left\{g\circ P_V\mid g\in\S(V),V\subseteq X \textnormal{ finite dimensional}\right\}\,,$$
where $P_V$ is the (orthogonal) projection onto $V$. Our generalization of Weyl quantization is then defined as
\begin{align}\label{QW nieuw}
	\QW(g\circ P_V):=\int_V \frac{\textnormal{d}^{r}x}{(2\pi)^{r/2}}\hat{g}(x)e^{i\phi(x)}\,,
\end{align}
where $r=\dim V$ and $\phi(x)=i\sqrt{\hbar}(a^*(x)-a(x))$ is a linear combination of creation and annihilation operators (other conventions such as $\phi(x)=\sqrt{\hbar}(a^*(x)+a(x))$ or $\phi(x)=\hbar(a^*(x)+a(x))$ work as well).

Definition \eqref{QW nieuw} relates well to other generalizations of Weyl quantization. Firstly, when $X=\R^{2n}$, \eqref{QW nieuw} is related to \eqref{QW} by a unitary, where we observe that \eqref{QW} is defined for a large class of functions $f$ \cite{Folland,Rieffel1993}, in particular for $f=g\circ P_V$. Secondly, \eqref{QW nieuw} is formally an extension of the quantization map on the Weyl algebra, as defined in \cite{BHR}. Indeed, suppose that $g(P_V(y))=e^{ix\cdot y}$. Then \eqref{QW nieuw} formally dictates $\QW(g\circ P_V)=e^{i\phi(x)}$, and these operators $e^{i\phi(x)}$ generate the Weyl algebra in the Fock representation. In fact, Binz, Honegger and Rieckers proved in \cite{BHR} that this field-theoretical Weyl quantization constitutes a strict deformation quantization, with the almost periodic functions (the C*-algebra generated by the functions $y\mapsto e^{ix\cdot y}$) on the classical side, and the Weyl algebra on the quantum side.

This paper proves the following new fact. Weyl quantization gives a strict deformation quantization of $\Sr$, and the image of $\Sr$ under $\QW$ is a dense subalgebra of the resolvent algebra. In particular, this result entails a continuous field of C*-algebras over $[0,1]$, with the resolvent algebra $\mR(X,\sigma)$ as the constant fiber above $(0,1]$, and $\Cr(X):=\overline{\Sr(X)}$ as the fiber above $0$.

The known continuous field of C*-algebras $\{A_\hbar\}_{\hbar\geq0}$, where $A_0=C_0(\R^{2n})$ and $A_\hbar=K(L^2(\R^n))$ for each $\hbar\in(0,1]$, only applies to finite dimension. As we have now extended this by $A_0\subseteq \Cr(\R^{2n})$, and $A_\hbar\subseteq\mR(\R^{2n},\sigma)$ for $\hbar\in(0,1]$, we can view the resolvent algebra as an elegant way to work in arbitrary dimension.

In addition to Weyl quantization, we also study Berezin quantization in the setting of the resolvent algebra. It turns out that Berezin quantization, defined by extension of
	$$\QB(g\circ P_V):=\int_V \frac{\textnormal{d}^{r}x}{(2\pi)^{r/2}}e^{-\frac{\hbar}{4}\|x\|^2}\hat{g}(x)e^{i\phi(x)}\,,$$
is a continuous positive linear injection $\QB:\Cr(X)\rightarrow\mR(X,\sigma)$ with dense range, which is equivalent to Weyl Quantization in the sense of \cite{KL98}. Again, and perhaps more clearly in this second quantization procedure, $\Cr(X)$ is seen to be the classical limit of $\mR(X,\sigma)$. We will therefore refer to $\Cr(X)$ as the \emph{commutative resolvent algebra}, and devote most of this article to an investigation of its structure.

It turns out that the commutative resolvent algebra is generated by the functions $h^\lambda_x(y):=1/(i\lambda-x\cdot y)$. This gives an equivalent, more direct definition
	$$\Cr(X):=C^*\left(h^\lambda_x\mid\lambda\in\R\setminus\{0\},x\in X\right),$$
	which also relates nicely to the definition of the resolvent algebra. In this way, $h^\lambda_x$ is the classical analogue of the generators $R(\lambda,x):=(i\lambda-\phi(x))^{-1}$ of $\mR(X,\sigma)$. 
	
	This analogy between classical and quantum can be quite useful. Many results of the resolvent algebra turn out to have a classical analogue, which can be understood through simple geometric pictures. For instance, for linearly independent $x,y\in X$, the result (from \cite{BG2008})
\begin{align}\label{distance resolvents}
	\norm{R(1,x)-R(1,y)}\geq1\,,
\end{align}
has a very easy classical counterpart
\begin{align}\label{distance resolvent functions}
	\supnorm{h^1_x-h^1_y}\geq1\,.
\end{align}
	
	An important aspect of the commutative resolvent algebra is that
	$$\Cr(X)=\varinjlim\Cr(V)\,,$$
for finite dimensional $V\subseteq X$, where the connecting maps defining the colimit are $P_V^*:\Cr(V)\rightarrow\Cr(W)$ for $V\subseteq W$. This is one of the reasons why much of our analysis is done on finite-dimensional spaces $X$.

In the last two sections of this paper we show the power and flexibility of the commutative resolvent algebra on $\R^m$. Arbitrary elements of $\Cr(\R^{m})$ are infinite sums of functions $g\circ P_V$, usually converging conditionally. We will make clear how these sums can be decomposed into a finite number of unconditional parts. We will end up with a characterization of the elements of $\Cr(\R^m)$ that behaves nicely with respect to its algebraic structure.

The Gelfand spectrum is a useful way of understanding a commutative C*-algebra. For this reason we will identify the Gelfand spectrum of $\Cr(\R^m)$  with the set of affine subspaces of $\R^m$, equipped with an interesting new topology. We characterize this topology by a convergence criterion as well as by providing a neighborhood basis. Either way, one easily identifies $\R^m$ with its $0$-dimensional affine subspaces. Thusly, we show that the Gelfand spectrum of $\Cr(\R^m)$ is a compactification of $\R^m$.

This paper is organized as follows. Section 2 gives the main definitions and the crucial results. These include a direct definition of $\Cr$ and a proof that $\Sr$ is a Poisson *-algebra. For the reader with a specific interest it is useful to know that Sections \ref{sct:Quantization}, \ref{sct: Function Spaces} and \ref{sct: Gelfand Spectrum} each depend solely on Section \ref{sct: Commutative Resolvent Algebra} and are independent otherwise. In Section \ref{sct:Quantization}, we discuss Weyl quantization, Berezin quantization, and the Resolvent algebra in the setting of Fock space. Section \ref{sct: Function Spaces} characterizes the elements of $\Cr(\R^m)$ in a way that suits its algebraic structure. Finally, Section \ref{sct: Gelfand Spectrum} establishes a precise characterization of the Gelfand spectrum of $\Cr(\R^m)$.



\section{Commutative Resolvent Algebra}
\label{sct: Commutative Resolvent Algebra}
Let $X$ be a real inner product space. We define the commutative resolvent algebra $\Cr(X)$, similar to the definition of the resolvent algebra $\mR(X,\sigma)$ of Buchholz and Grundling \cite{BG2008}, but without assuming the existence of a compatible symplectic structure $\sigma$ on $X$. The classical analogues of the resolvents $R(\lambda,x)$ (defined in \cite{BG2008}, and equivalently in our Section \ref{sct:Quantization},) are the functions
	$$h^\lambda_x(y):=1/(i\lambda-x\cdot y)\qquad (y\in X)\,,$$
	for $x\in X$, $\lambda\in\R\setminus\{0\}$. The inner product $\cdot$ gives rise to a norm $\norm{~}$ and a topology (the standard ones for real pre-Hilbert spaces $X$), making $h^\lambda_x$ a continuous function.

\begin{defn}\label{def: The Commutative Resolvent Algebra}
	The \textbf{commutative resolvent algebra} on $X$, denoted by $\Cr(X)$, or simply by $\Cr$, is the C*-subalgebra of $C_b(X)$ generated by the functions $h^\lambda_x$.
\end{defn}
This C*-algebra $\Cr$ is unital, since $ih^1_{0}=1.$ Let us write $h^\lambda_x=g^\lambda\circ p_x$ for $g^\lambda:=1/(i\lambda-\cdot)$ 
and $$p_x(y):=x\cdot y\,.$$ For $x\neq0$, the pull-back $p_x^*:C_0(\R)\rightarrow C_b(X)$ is an isometric *-homomorphism, allowing for an equivalent definition of $\Cr$. Indeed, the Stone--Weierstrass theorem gives $C^*(g^\lambda|\lambda\in\R\!\setminus\!\{0\}) = C_0(\R)$, implying $C^*(h_x^\lambda|\lambda\in\R\!\setminus\!\{0\})=p_x^*(C_0(\R))$, for any $x$. Hence, $\Cr$ is the C*-algebra generated by $\left\{g\circ p_x\mid g\in C_0(\R), x\in X\right\}$.

 We will see that these $g\circ p_x$ generate more general functions $g\circ P_V$, when we generalize $p_x$ by the (orthogonal) projection $P_V$ onto any finite dimensional subspace $V\subseteq X$, and let $g\in C_0(V)$. It will sometimes be useful to assume that $g$ is a Schwartz function, by which we mean $g\in\S(V)$.

\begin{lem}\label{lem:product of levees}
	Let $V_1,V_2\subseteq X$ be linear and $g_i\in C_0(V_i)$ for $i\in\{1,2\}$. Then
	\begin{enumerate}[(i)]
	    \item $(g_1\circ P_{V_1})(g_2\circ P_{V_2})=g\circ P_{V_1+V_2}$ for some $g\in C_0(V_1+V_2)$,
	    \item if $g_1$ and $g_2$ are both Schwartz, then $g$ is Schwartz as well.
    \end{enumerate}
\end{lem}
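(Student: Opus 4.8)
The plan is to realise the product as a single function pulled back from the sum $W:=V_1+V_2$, and then to read off both assertions from one geometric fact about $W$. Since $V_1,V_2$ are finite dimensional (as in all cases of interest), so is $W$, and all three subspaces are closed, so the orthogonal projections are well defined.

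First I would push both factors down to $W$. Because $V_i\subseteq W$, the orthogonal projections satisfy $P_{V_i}=P_{V_i}P_W$: for any $y$ we have $y-P_Wy\perp W\supseteq V_i$, so $P_{V_i}(y-P_Wy)=0$. Setting $f_i:=g_i\circ(P_{V_i}|_W)\in C_b(W)$, this gives $g_i\circ P_{V_i}=f_i\circ P_W$, and therefore $(g_1\circ P_{V_1})(g_2\circ P_{V_2})=(f_1f_2)\circ P_W$. It then suffices to put $g:=f_1f_2$ and show that $g\in C_0(W)$ for (i), and $g\in\S(W)$ (when the $g_i$ are Schwartz) for (ii).

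The geometric core is that the linear map $T:W\to V_1\times V_2$, $w\mapsto(P_{V_1}w,P_{V_2}w)$, is injective: if $P_{V_1}w=P_{V_2}w=0$ then $w\perp V_1$ and $w\perp V_2$, hence $w\perp V_1+V_2=W$, forcing $w=0$ as $w\in W$. Since $W$ is finite dimensional, $T$ is bounded below, yielding a constant $c>0$ with $\max(\norm{P_{V_1}w},\norm{P_{V_2}w})\geq c\norm{w}$ for all $w\in W$. This says precisely that $w$ cannot escape to infinity while staying bounded in both projections. Assertion (i) is then immediate: given $\varepsilon>0$ pick $R$ with $|g_i(v)|<\varepsilon$ for $\norm{v}>R$; for $\norm{w}>R/c$ at least one of $\norm{P_{V_1}w},\norm{P_{V_2}w}$ exceeds $R$, so one factor of $f_1f_2$ is below $\varepsilon$ and the other is bounded by $\supnorm{g_i}$, giving decay at infinity. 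For (ii) I would upgrade this to rapid decay: the Schwartz bounds $|g_i(v)|\leq C_N(1+\norm{v})^{-N}$ combine with $(1+\norm{P_{V_1}w})(1+\norm{P_{V_2}w})\geq 1+\max(\norm{P_{V_1}w},\norm{P_{V_2}w})\geq\min(1,c)(1+\norm{w})$ to give $|f_1f_2(w)|\leq C(1+\norm{w})^{-N}$ for every $N$. The derivatives are treated identically: Leibniz splits $\partial^\alpha(f_1f_2)$ into products of derivatives of the $f_i$, and the chain rule for the linear maps $P_{V_i}|_W$ (which have norm $\leq1$) shows each $\partial^\beta f_i(w)$ is a derivative of $g_i$ evaluated at $P_{V_i}w$, hence still rapidly decaying in $\norm{P_{V_i}w}$, so the same product estimate applies.

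The main obstacle is conceptual rather than computational. Each individual factor $f_i$ is merely in $C_b(W)$: it is constant along $W\cap V_i^\perp$, so on its own it is neither $C_0$ nor Schwartz. All the decay must come from the interplay of the two projections, and the injectivity (hence bounded-belowness) of $T$ is exactly the mechanism that converts ``large $\norm{w}$'' into ``large in at least one projection.'' Once that estimate is in place, the $\varepsilon$-argument for (i) and the Leibniz-plus-chain-rule bookkeeping for (ii) are routine.
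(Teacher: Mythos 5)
Your proof is correct, and while it starts from the same reduction as the paper, the decay argument is genuinely different. Both proofs first use $P_{V_i}=P_{V_i}\circ P_{V_1+V_2}$ to realise the product as $g\circ P_{V_1+V_2}$ with $g$ the restriction of the product to $W:=V_1+V_2$. From there the paper chooses a block decomposition $V_1+V_2=U_1\oplus U_2\oplus U_3$ with $V_1=U_1\oplus U_3$ and $V_2=U_2\oplus U_3$, writes $g(u_1+u_2+u_3)=g_1(u_1+u_3)g_2(u_2+u_3)$, bounds the Schwartz norms of $g$ via the general Leibniz rule to obtain (ii) first, and then deduces (i) from (ii) by density of $\S$ in $C_0$. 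You instead isolate a single quantitative fact: the linear map $w\mapsto(P_{V_1}w,P_{V_2}w)$ is injective on $W$, hence (in finite dimensions) bounded below, giving $\max(\norm{P_{V_1}w},\norm{P_{V_2}w})\geq c\norm{w}$. This yields (i) directly by an $\varepsilon$-argument and (ii) by the polynomial-decay estimate combined with Leibniz and the chain rule. The underlying geometry is the same in both arguments --- knowing both projections of $w\in W$ determines $w$ --- but your packaging buys a direct proof of (i) that does not route through Schwartz functions, and it avoids having to choose block coordinates compatible with both orthogonal projections (a point the paper's decomposition glosses over slightly); the paper's decomposition, in exchange, makes the Schwartz-norm bookkeeping for (ii) entirely coordinate-wise.
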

\begin{proof}
	Since $P_{V_i}=P_{V_i}\circ P_{V_1+V_2}$, we find $(g_1\circ P_{V_1})(g_2\circ P_{V_2})=g\circ P_{V_1+V_2}$ for $g:=(g_1\circ P_{V_1})(g_2\circ P_{V_2})_{\upharpoonright V_1+V_2}$. Now decompose $V_1+V_2=U_1\oplus U_2\oplus U_3$ for linear $U_i\subseteq X$ with $V_1=U_1\oplus U_3$, $V_2=U_2\oplus U_3$. Then $$g(u_1+u_2+u_3)=g_1(u_1+u_3)g_2(u_2+u_3)$$ for $u_i\in U_i$. When $g_1$ and $g_2$ are Schwartz, bounding the Schwartz norms of $g$ (with respect to any reasonable basis of $U_1\oplus U_2\oplus U_3$) is a matter of applying the general Leibniz rule. This gives $(ii)$, and by density of $\S$ in $C_0$, $(i)$ follows.
\end{proof}

We will relate the structure of $\Cr$ to the functions of the form $g\circ P$, so let us give this type of functions a name.

\begin{defn}
    A \textbf{levee} $f:X\rightarrow\C$ is a composition $f=g\circ P$ of some finite dimensional projection $P$ and some function $g\in C_0(\ran P)$.
\end{defn}

The terminology is explained in Section \ref{sct: Function Spaces}, and illustrated by Figure \ref{fig:1}. Instead of the term levee, one could call a function of the form $g\circ P$ cylindrical, relating to cylindrical sets and measures. However, this might cause confusion, as the term `cylindrical function' in some contexts refers to a Bessel function. 

Levees for which $g$ is Schwartz will be very useful when working with Weyl quantization. We therefore define
    $$\Sr(X):=\spn\left\{g\circ P \text{ levee}\mid g\in \S(\ran P)\right\}\,.$$

This space of finite sums of levees turns out to be an algebra.

\begin{prop}\label{prop: Finite Sr is dense in Cr}
    The space $\Sr(X)$ is a dense *-subalgebra of $\Cr(X)$.
\end{prop}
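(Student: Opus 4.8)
The plan is to prove three things: that $\Sr(X)$ is closed under multiplication, that it is closed under the $*$-operation (complex conjugation), and that it is dense in $\Cr(X)$. Being a span, $\Sr(X)$ is automatically a linear subspace, so these three facts establish that it is a dense $*$-subalgebra.

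For closure under multiplication, a general element of $\Sr(X)$ is a finite sum $\sum_j g_j\circ P_{V_j}$ with $g_j\in\S(V_j)$. By bilinearity of the product it suffices to handle a single product $(g_1\circ P_{V_1})(g_2\circ P_{V_2})$. This is exactly the content of Lemma~\ref{lem:product of levees}, part $(ii)$: the product equals $g\circ P_{V_1+V_2}$ for some Schwartz $g\in\S(V_1+V_2)$, hence lies in $\Sr(X)$. So multiplicative closure is essentially immediate from the preceding lemma. Closure under $*$ is even easier: if $g\in\S(V)$ then $\overline{g}\in\S(V)$ as well (complex conjugation preserves the Schwartz class), and $\overline{g\circ P_V}=\overline{g}\circ P_V$, so conjugating a sum of Schwartz levees gives another sum of Schwartz levees.

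The main work is density. First I would note that each generator $h^\lambda_x=g^\lambda\circ p_x$ is a levee whose amplitude $g^\lambda=1/(i\lambda-\cdot)$ lies in $C_0(\R)$ but is \emph{not} Schwartz (it decays only like $1/|t|$). Since $\Cr(X)$ is generated as a C*-algebra by these $h^\lambda_x$, the strategy is to show every generator lies in the closure $\overline{\Sr(X)}$, and that this closure is a C*-subalgebra, whence it must contain all of $\Cr(X)$; the reverse inclusion $\Sr(X)\subseteq\Cr(X)$ I would verify separately. To approximate $h^\lambda_x$ by Schwartz levees, I would use that $\S(\R)$ is dense in $C_0(\R)$ in the supremum norm: pick $g_n\in\S(\R)$ with $\supnorm{g_n-g^\lambda}\to0$, and then since the pull-back $p_x^*$ is isometric (for $x\neq0$) we get $\supnorm{g_n\circ p_x-h^\lambda_x}\to0$, with each $g_n\circ p_x\in\Sr(X)$. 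This shows every generator lies in $\overline{\Sr(X)}$.

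To close the argument I would check that $\overline{\Sr(X)}$ is itself a C*-subalgebra of $C_b(X)$: it is a closed linear subspace by construction, closed under $*$ by continuity of conjugation together with the $*$-closure of $\Sr(X)$ established above, and closed under multiplication because the product is continuous on the bounded set where the relevant approximants live and $\Sr(X)$ is multiplicatively closed. Since this closed C*-subalgebra contains all the generators $h^\lambda_x$, it contains the C*-algebra they generate, namely $\Cr(X)$; therefore $\Cr(X)\subseteq\overline{\Sr(X)}$. Combined with $\Sr(X)\subseteq\Cr(X)$ (each Schwartz levee $g\circ P_V$ lies in $\Cr$, which one sees by approximating $g$ in $C_0(\ran P_V)$ and using that $\Cr$ contains all $g\circ P_V$ with $g\in C_0$), density follows. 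The one subtlety to keep an eye on is precisely this inclusion $\Sr(X)\subseteq\Cr(X)$ for higher-dimensional $V$: one must know that a single levee $g\circ P_V$ with $\dim V\geq2$ already belongs to $\Cr$, which follows from Lemma~\ref{lem:product of levees} by writing such functions as limits of products of the one-dimensional generators $g\circ p_x$. This reduction of multi-dimensional levees to products of rank-one ones is the step I expect to require the most care.
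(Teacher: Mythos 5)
Your proposal follows the paper's proof almost exactly: multiplicative closure via Lemma~\ref{lem:product of levees}, closure under conjugation because $\S$ is conjugation-invariant, and density by approximating $g^\lambda=1/(i\lambda-\cdot)$ uniformly by Schwartz functions and pulling back along the isometry $p_x^*$. The one place where you stop short is exactly the step you yourself flag as delicate: showing that a Schwartz levee $g\circ P_V$ with $\dim V\geq 2$ lies in $\Cr$ at all. Your parenthetical justification (``approximating $g$ in $C_0(\ran P_V)$ and using that $\Cr$ contains all $g\circ P_V$ with $g\in C_0$'') is circular as stated, since $\Cr$ is generated only by the rank-one levees $h^\lambda_x$, and Lemma~\ref{lem:product of levees} cannot supply the missing piece: that lemma says a \emph{product} of levees is again a levee, whereas here you need the converse-type statement that an arbitrary levee is a limit of sums of products of rank-one levees. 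The paper closes this gap by fixing an orthonormal basis $v_1,\dots,v_k$ of $V$ and invoking the density of the algebraic tensor product $\S(\R)^{\otimes k}$ in $\S(V)$ for the Schwartz topology (which dominates the uniform one); this reduces to the case $g(t_1v_1+\cdots+t_kv_k)=g_1(t_1)\cdots g_k(t_k)$, for which $g\circ P_V=\prod_i g_i\circ p_{v_i}$ is manifestly a product of one-dimensional levees in $\Cr$. With that density fact supplied, your argument is complete and coincides with the paper's.
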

\begin{proof}
   Let $g\circ P_V$ be a levee with $g\in\S(V)$, and fix an orthonormal basis $v_1,\ldots,v_k$ of $V$. Because the algebraic tensor product $\S(\R)^{\otimes k}$ is densely embedded in $\S(V)$ (with respect to the Schwartz topology), we may assume that $g(t_1v_1+\ldots +t_kv_k)=g_1(t_1)\cdots g_k(t_k)$ for $g_i\in\S(\R)$. We obtain $g\circ P_V=\prod g_i\circ p_{v_i}\in\Cr$ and conclude that $\Sr\subseteq\Cr$.\\
    The set $\Sr$ is clearly closed under linear combinations and involution. Furthermore, closure under multiplication follows by Lemma \ref{lem:product of levees}, and we may conclude that $\Sr$ is a *-subalgebra.\\
    Finally, any generator $h^\lambda_x$ is approximated by functions $g\circ p_x\in\Sr$ where $g\in\S(\R)$ approximates $g^\lambda=1/(i\lambda-\cdot)\in C_0(\R)$. This proves density.
\end{proof}
%
\subsection{$\Cr$-functions at large scale} We will specify the behavior of an arbitrary function $f\in \Cr(X)$ at infinity. To this purpose, we assume $X$ is finite dimensional, but we will shortly see that this gives us information about the general case as well. Let $V+w\subseteq X$ be an affine subspace, with space of directions $S(V):=\left\{v\in V\mid\norm{v}=1\right\}$ when $V\neq\{0\}$, and $S(\{0\}):=\{0\}$. We equip $S(V)$ with the spherical measure $\mu$. The convergence at infinity of $f$ is captured by the following lemma.
\begin{lem}\label{lem: f^V+w constant ae}
	Take $f\in \Cr(X)$ for a finite dimensional $X$. Then the limit 
	\begin{align}\label{f^Vw}
	    f^{V,w}(v):= \lim_{t\rightarrow\infty} f(tv+w)
	\end{align}
	exists for all $v\in S(V)$ and hence defines a function $f^{V,w}:S(V)\rightarrow\C$. Furthermore, $f^{V,w}$ takes a constant value $\mu$-almost everywhere. If $f=g\circ P$ is a levee, then this value is $0$ if $V\nsubseteq \ker P$, and this value is $g(Pw)$ if $V\subseteq \ker P$.
\end{lem}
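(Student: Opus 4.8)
The plan is to prove the three assertions in increasing generality: first for a single levee (which simultaneously establishes the explicit formula in part three), then for finite sums of levees by linearity, and finally for arbitrary $f\in\Cr(X)$ by uniform approximation via Proposition \ref{prop: Finite Sr is dense in Cr}.

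First I would treat a single levee $f=g\circ P$. Writing $f(tv+w)=g(tPv+Pw)$ and splitting on whether $v\in\ker P$ settles everything: if $Pv\neq0$ the argument $tPv+Pw$ leaves every compact subset of $\ran P$ as $t\to\infty$, so $g\in C_0(\ran P)$ forces $f^{V,w}(v)=0$; if $Pv=0$ then $f(tv+w)=g(Pw)$ is constant in $t$, giving $f^{V,w}(v)=g(Pw)$. The exceptional set $\{v\in S(V)\mid Pv=0\}=S(V)\cap\ker P$ is the unit sphere of $V\cap\ker P$, which is a proper sub-sphere (hence $\mu$-null), or empty, precisely when $V\nsubseteq\ker P$, and is all of $S(V)$ when $V\subseteq\ker P$. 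This yields exactly the stated dichotomy.

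For a finite sum $f=\sum_j g_j\circ P_j\in\Sr(X)$ I would apply the levee result termwise: each $\lim_{t\to\infty}g_j(P_j(tv+w))$ exists, so the finite sum of limits exists for every $v$, and off the union of the finitely many $\mu$-null exceptional sets the value equals the constant $\sum_{j:\,V\subseteq\ker P_j}g_j(P_jw)$. Thus $f^{V,w}$ is defined on all of $S(V)$ and is constant $\mu$-almost everywhere.

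The main work is the passage to general $f\in\Cr(X)$, where I only have a sequence $f_n\in\Sr(X)$ with $\supnorm{f-f_n}\to0$. For existence of the limit at a fixed $v$ I would run an $\varepsilon/3$-argument: since $|f(tv+w)-f_n(tv+w)|\leq\supnorm{f-f_n}$ uniformly in $t$, and $t\mapsto f_n(tv+w)$ is convergent, hence Cauchy, as $t\to\infty$, the function $t\mapsto f(tv+w)$ is itself Cauchy as $t\to\infty$, so $f^{V,w}(v)$ exists. Letting $t\to\infty$ in the same uniform bound gives $|f^{V,w}(v)-f_n^{V,w}(v)|\leq\supnorm{f-f_n}$, i.e.\ $f_n^{V,w}\to f^{V,w}$ uniformly on $S(V)$. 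Finally, writing $f_n^{V,w}=c_n$ off a $\mu$-null set $N_n$ and discarding the still $\mu$-null union $\bigcup_n N_n$, the uniform convergence forces $\{c_n\}$ to be Cauchy in $\C$, say $c_n\to c$, whence $f^{V,w}(v)=\lim_n c_n=c$ for every $v\notin\bigcup_n N_n$. The delicate point is precisely this last interchange: recognizing that a uniform limit of merely almost-everywhere-constant functions is again almost-everywhere constant, which hinges on the constants forming a Cauchy sequence and on the countable union of exceptional sets remaining $\mu$-null.
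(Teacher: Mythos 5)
Your proposal is correct and follows essentially the same route as the paper: a case distinction for a single levee (noting that $S(V)\cap\ker P$ is a $\mu$-null sub-sphere when $V\nsubseteq\ker P$), linearity for finite sums, and then a uniform-in-$t$ approximation argument combined with countable additivity of $\mu$ to pass to general $f\in\Cr(X)$. The paper merely states the levee case as ``checked by a case distinction'' and compresses the Moore--Osgood-type limit interchange that you spell out.
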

\begin{proof}
	If $f$ is a levee, then the lemma can be checked by a case distinction.  So when $(f_i)\subseteq \Sr(X)$ converges to $f\in\Cr(X)$, then we have a well-defined function $f_i^{V,w}$ with $f_i^{V,w}=c_i$ $\mu$-a.e. for some $c_i\in\C$. The sequence $(c_i)$ converges to some $c\in\C$, because $(f_i)$  is Cauchy in $\supnorm{\cdot}$. If
		$$\Gamma:=\{v\in S(V)\,|\,\forall{i}\!: f_i^{V,w}(v)=c_i\}\,,$$
	then $\mu(S(V)\setminus\Gamma)=0$ by countable additivity of $\mu$. Now for arbitrary $v\in\Gamma$ we have
	    $$ \lim_{i\rightarrow\infty}\lim_{t\rightarrow\infty} f_i(tv+w)=\lim_{i\rightarrow\infty}c_i=c\,,$$
	and for any $v\in S(V)$ we have $f_i(tv+w)\rightarrow f(tv+w)$ uniformly in $t$. Therefore, $f^{V,w}$ is a function with $f^{V,w}=c$ $\mu$-a.e.
\end{proof}

Apart from its use in Sections \ref{sct: Function Spaces} and \ref{sct: Gelfand Spectrum}, this lemma shows us that levees that are constant in different directions are linearly independent. Indeed, suppose that $\sum_{i=1}^k g_i\circ P_{V_i}=0$ for levees $g_i\circ P_{V_i}$ with distinct $V_i$. One can assume without loss of generality that $X=V_1+\ldots+V_k$ (thereby reducing to the finite dimensional case), and see that $$0=\big(\sum g_i\circ P_{V_i}\big)^{V,w}=g_j(w)\qquad\mu\text{-a.e.,}$$ for $V=V_j^\perp$ a maximal element of $\{V_1^\perp,\ldots,V_k^\perp\}$. It then follows that each $g_i\circ P_{V_i}=0$. 

Thanks to this linear independence, it is allowed to linearly extend a function defined on levees $g\circ P$, as long as this definition is linear in $g$. This will precisely be the case for our two quantization maps in Section \ref{sct:Quantization}.

\subsection{Poisson structure}\label{sct: Poisson Structure}
If $X$ has a compatible complex structure (and therefore in particular a symplectic structure), we can put a canonical Poisson structure on $\Sr(X)$ (which will be necessary for strict deformation quantization). Equipping $X=\R^{2n}$ with the symplectic structure $\sigma(x,y)=\sum x_{n+j}y_j-x_jy_{n+j}$, the *-algebra $\Sr(\R^{2n})$ is a Poisson subalgebra of $C^\infty(\R^{2n})$, because any partial derivative of a levee is again a levee. Let $\{\cdot,\cdot\}_{2n}$ be the Poisson bracket of $C^\infty(\R^{2n})$, and hence of $\Sr(\R^{2n})$. If a compatible hermitian form is fixed on $\R^{2n}$, then any surjective, continuous, partial isometry $p:X\rightarrow\R^{2n}$ (for any $n\in\N$) induces a Poisson structure on the image of $p^*:\Sr(\R^{2n})\rightarrow\Sr(X)$. In effect, we are defining
	$$\{f\circ p,g\circ p\}:=\{f,g\}_{2n}\circ p\,.$$
It can be shown that this bracket does not depend on $p$, using the equivariance of $\{\cdot,\cdot\}_{2n}$ under symplectic transformations and the tensor product. This gives us a Poisson structure on the whole of $\Sr(X)$, because any three levees $g_1\circ P_{V_1}$, $g_2\circ P_{V_2}$, $g_3\circ P_{V_3}$ are in the image of a single $p^*$, namely for the $p$ with $(\ker p)^\perp\supseteq V_1+V_2+V_3$.


\section{Quantization}
\label{sct:Quantization}
Now that we have introduced the classical setup (postponing the more advanced results until Sections 4 and 5), we will connect classical with quantum. We begin by defining the core concepts we need.

\subsection{Fock space, the resolvent algebra, and quantization}
\label{sct:Fock Space and the Resolvent Algebra}
Let $(X,\sigma)$ be a symplectic vector space admitting a compatible hermitian form $\p{\cdot}{\cdot}$ (i.e. $\sigma(x,y)=2\Im\p{x}{y}$, and set $x\cdot y := \Re\p{x}{y}$). For analyzing the resolvent algebra, Buchholz and Grundling use a field $\phi$ that can be defined in multiple ways. Since the field $\phi$ plays an important role for field theoretic quantization as well as for the resolvent algebra, let us define $\phi$ concretely in the setting of Fock space. That is, our Hilbert space is $\F(\overline{X})$, the bosonic Fock space (symmetric Hilbert space) of the completion of $X$ with respect to its complex inner product. We refer to \cite{Guichardet} for details on the Fock space, as well as for the details of the following remarks. Throughout this section, we fix $\hbar\in\R\setminus\{0\}$. As a common domain for $\phi(x)$, we take
	$$\mC:=\spn\left\{\Exp(w):=\sum_{k=0}^\infty\frac{\otimes^k w}{\sqrt{k!}}\mid w\in\overline{X}\right\}\,,$$
on which we define, for any $x\in X$,
	$$\phi(x)\Exp(w):=i\sqrt{\hbar}\left(\p{x}{w}\Exp(w)-\dtzero{\Exp(w+tx)}\right)\,,$$
where the derivative exists because of $(\Exp(w),\Exp(z))=e^{\p{w}{z}}$. Note that our inner product is linear in the second argument, as opposed to \cite{Guichardet}. By Stone's theorem and some calculation, one obtains 
\begin{align}\label{e^(i phi(x)) in Fock rep}
	e^{i\phi(x)}\Exp(w)=e^{-\tfrac{\hbar}{2}\norm{x}^2-\sqrt{\hbar}\p{x}{w}}\Exp(w+\sqrt{\hbar}x)\,.
\end{align}
 It then also follows that $\phi(x)$ is essentially self-adjoint. From \eqref{e^(i phi(x)) in Fock rep} it can easily be checked that
	\[e^{i\phi(x)}e^{i\phi(y)}=e^{-\tfrac{i\hbar}{2}\sigma(x,y)}e^{i\phi(x+y)}\,.\]
The resolvent algebra was defined by Buchholz and Grundling in \cite{BG2008}, through abstract relations. We write this definition down in Fock representation, where it becomes shorter and more suited to our purposes.
\begin{defn}\label{def: The Resolvent Algebra}
	The \textbf{resolvent algebra} $\mR(X,\sigma)$ is the C*-subalgebra of $B(\F(\overline{X}))$ generated by the resolvents $R(\lambda,x):=(i\lambda-\phi(x))^{-1}$ for $\lambda\in\R\!\setminus\!\{0\}$ and $x\in X$.
\end{defn}
The resolvent algebra can be thought of as the commutative resolvent algebra, with the functions $y\mapsto x\cdot y$ replaced by the operators $\phi(x)$. This analogy alone can already yield helpful intuition, as exemplified by \eqref{distance resolvents} and \eqref{distance resolvent functions}. However, to establish a rigorous relation between the two algebras, we will use the concept of strict deformation quantization, defined below. This definition is equivalent to \cite[Definition 1.1.1]{KL98}, other definitions of strict deformation quantization are reviewed in \cite{Hawkins}.

\begin{defn}\label{Def: sdq}
	Let $\tilde{A}_0$ be a complex Poisson algebra, densely contained in a C*-algebra $A_0$, with $\{f,g\}^*=\{f^*,g^*\}$. A \textbf{strict deformation quantization} of $\tilde{A}_0$ consists of a subset $I\subseteq\R$ with $0\in I\cap\overline{I\!\setminus\!\{0\}}$, a collection of C*-algebras $\{A_\hbar\}_{\hbar\in I}$ (with norms $\nrm{\cdot}{\hbar}$), and a collection of injective linear *-preserving maps $$\{Q_\hbar:\tilde{A}_0\rightarrow A_\hbar\}_{\hbar\in I}\,,$$ such that $Q_0$ is the identity map, $Q_\hbar(\tilde{A}_0)$ is a dense *-subalgebra of $A_\hbar$ (for each $\hbar\in I$), and for all $f,g\in\tilde{A}_0$:
	\begin{gather}
        \hbar\mapsto \nrm{Q_\hbar(f)}{\hbar}\text{ is continuous on $I$,}\tag{I}\label{sdq1}\\
        \lim_{\hbar\rightarrow0}\nrm{Q_\hbar(f)Q_\hbar(g)-Q_\hbar(fg)}{\hbar}=0\,,\tag{II}\label{sdq2}\\
        \lim_{\hbar\rightarrow0}\nrm{\tfrac{i}{\hbar}[Q_\hbar(f),Q_\hbar(g)]-Q_\hbar(\{f,g\})}{\hbar}=0\tag{III}\label{sdq3}\,.
    \end{gather}
\end{defn}

\begin{remark}
Under further assumptions formulated in \cite[Theorem 1.2.4]{KL98}, Definition \ref{Def: sdq} implies that $\{A_\hbar\}_{\hbar\in I}$ is a continuous field of C*-algebras. 
We will show that this definition can be applied to the resolvent algebra: in our setting both Weyl and Berezin quantization form a strict deformation quantization inducing a continuous field of C*-algebras (this continuous field of C*-algebras is the same for Weyl and Berezin quantization). This follows from combining Theorem \ref{thm: sdq}, Lemma \ref{lem: QB continuous in hbar>0}, Lemma \ref{Asymptotics Weyl and Berezin}, and Theorem \ref{thm:Berezin} with \cite[Theorem 1.2.4]{KL98}.
\end{remark}

\subsection{Weyl quantization}\label{sct: Weyl Quantization}


We are interested in a generalization of Weyl quantization, defined on a dense subset of the commutative resolvent algebra. As argued in the introduction, it makes intuitive sense to define Weyl quantization of a levee $g\circ P_V\in\Sr(X)$ as
\begin{align}\label{def Weyl quantization}
	\QW(g\circ P_V):=\int_V \dslash{x}\hat{g}(x)e^{i\phi(x)}\,,
\end{align}
and we will now explain how to mathematically interpret this definition. We write ~$\dslash{y}:=(2\pi)^{-m/2}\text{d}^my$ whenever $y$ runs over an $m$-dimensional space, in particular simplifying the notation of our Fourier transform, $\hat{g}(x)=\int_V\dslash{y}g(y)e^{-ix\cdot y}.$ All operator-valued integrals in this section are of the form $\int \text{d}\mu(x)A_x$, where $\mu$ is a finite complex measure and $x\mapsto A_x$ is strongly continuous. These can be defined by $(\int \text{d}\mu(x) A_x)\psi:=\int \text{d}\mu(x)A_x\psi$, where the latter integral is a (complex) Pettis integral. In our situation $\text{d}\mu(x)=\dslash{x}\hat{g}(x)$, and $A_x = e^{i\phi(x)}$, so our expression \eqref{def Weyl quantization} is defined. Notice that the $\hbar$-dependence of $\QW$ comes from $\phi$.

We can linearly extend \eqref{def Weyl quantization}, as was discussed after the proof of Lemma \ref{lem: f^V+w constant ae}. Furthermore, any $\QW(f)$ is bounded because of the estimation
\begin{align}\label{QW bounded by L1-norm}
	\norm{\QW(g\circ P_V)}\leq\nrm{\hat{g}}{1}\,,
\end{align}
and we therefore have a well-defined linear map $\QW:\Sr(X)\rightarrow B(H)$. In order to show that this map determines a strict deformation quantization, our main task is to prove that $\QW(\Sr(X))$ is a dense *-subalgebra of $\mR(X,\sigma)$.

To this purpose, we first restrict ourselves to $V=\spn\{x\}$. In this case, it is nicer to replace $P_V$ by the function $p_x:y\mapsto x\cdot y$. If $g\in\S(\R)$, then the levee $g\circ p_x\in\Sr(X)$ is quantized by the operator
	$$\QW(g\circ p_x)=\int_\R \dslash{t} \hat{g}(t)e^{it\phi(x)}\,,$$
    which turns out to behave nicely as a function of $g$.
    
    \begin{prop}\label{lem:Phi continuous *-hom}
    	Weyl quantization applied to levees of one variable coincides with the continuous functional calculus of $\phi(x)$. That is,
    	$$\QW(g\circ p_x)=g(\phi(x))\,.$$
    \end{prop}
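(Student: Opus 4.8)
The plan is to show that the map $g\mapsto\QW(g\circ p_x)$ and the map $g\mapsto g(\phi(x))$ given by the continuous functional calculus agree on all of $C_0(\R)$, by first establishing agreement on a dense, well-behaved subclass and then extending by continuity. The natural dense subclass is the span of the resolvent functions $g^\lambda=1/(i\lambda-\cdot)$, or rather Schwartz functions, since these are the functions for which the integral defining $\QW(g\circ p_x)$ is literally given.

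First I would verify the claim on the generating functions $g^\lambda$. For these, the continuous functional calculus gives $g^\lambda(\phi(x))=(i\lambda-\phi(x))^{-1}=R(\lambda,x)$. On the quantization side, I would compute $\QW(g^\lambda\circ p_x)=\int_\R\dslash{t}\,\widehat{g^\lambda}(t)e^{it\phi(x)}$. The Fourier transform of $g^\lambda$ is (up to the normalizing constant) an exponential supported on a half-line, say proportional to $\1_{\{\sgn(\lambda)t>0\}}e^{-\lambda t}$, so the integral becomes, after inserting this explicit $\widehat{g^\lambda}$,
\begin{align*}
	\QW(g^\lambda\circ p_x)=\int_0^\infty \text{d}t\,(\mp i)e^{\mp i\lambda t}e^{\pm it\phi(x)}\,,
\end{align*}
which is exactly the standard integral representation of the resolvent $(i\lambda-\phi(x))^{-1}$ as the Laplace transform of the one-parameter group $t\mapsto e^{it\phi(x)}$ generated by the self-adjoint operator $\phi(x)$. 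Carrying out this Pettis integral against $\Exp(w)$ and using \eqref{e^(i phi(x)) in Fock rep} identifies it with $R(\lambda,x)$, giving the claim for $g=g^\lambda$.

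Next I would bootstrap from the generators to all of $C_0(\R)$. Both sides of the asserted identity are linear in $g$; moreover, $g\mapsto g(\phi(x))$ is the continuous functional calculus *-homomorphism from $C_0(\R)$, which is isometric on $C^*(g^\lambda\mid\lambda)=C_0(\R)$, while $g\mapsto\QW(g\circ p_x)$ is bounded by $\nrm{\hat g}{1}$ thanks to \eqref{QW bounded by L1-norm} and is also multiplicative and $*$-preserving on the relevant functions. Since the two maps agree on the generators $g^\lambda$ and are both continuous, and the $g^\lambda$ generate $C_0(\R)$ as a C*-algebra, one might hope to conclude immediately. The subtlety, and the main obstacle, is that $\QW(\cdot\circ p_x)$ is a priori only bounded in the $L^1$-norm of the Fourier transform rather than in the supremum norm, so to run a density argument in $\supnorm{\cdot}$ I must first know this map is itself a bounded (indeed $*$-homomorphic) map for the sup-norm. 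The cleanest route is therefore to prove that $g\mapsto\QW(g\circ p_x)$ is a $*$-homomorphism on Schwartz functions — multiplicativity follows from Lemma \ref{lem:product of levees} together with the Weyl relation $e^{i\phi(s x)}e^{i\phi(t x)}=e^{i\phi((s+t)x)}$ (note $\sigma(sx,tx)=0$), and the convolution theorem for $\hat{\cdot}$ — so that it extends to a sup-norm-contractive $*$-homomorphism on $C_0(\R)$; two C*-algebra $*$-homomorphisms agreeing on a generating set coincide, yielding $\QW(g\circ p_x)=g(\phi(x))$ for all $g\in C_0(\R)$, and in particular on $\S(\R)$ as stated.
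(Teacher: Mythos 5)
Your proposal is correct in outline and shares the paper's overall strategy --- identify both $g\mapsto\QW(g\circ p_x)$ and the functional calculus as continuous $*$-homomorphisms on $C_0(\R)$ and check agreement on the resolvent generators $g^\lambda$ --- but it differs in both halves. For the generators, you compute $\widehat{g^\lambda}$ explicitly and recognise the Laplace-transform representation of the resolvent of the self-adjoint generator $\phi(x)$, whereas the paper simply cites \cite[Corollary 4.4]{BG2008}; your route is more self-contained and the computation is right (note only that $g^\lambda\notin\S(\R)$, so this is really a computation for the $L^1$-extension $\rho(f)=\int\dslash{t}f(t)e^{it\phi(x)}$ of the map, which is harmless since $\widehat{g^\lambda}\in L^1(\R)$). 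The more substantive difference is how sup-norm contractivity is obtained. The paper factors through the group C*-algebra: $\norm{\rho(f)}\le\nrm{f}{*}$ because $\rho$ is the integrated form of a unitary representation of $\R$, and the Fourier transform $C_0(\R)\rightarrow C^*(\R)$ is continuous, so the bound $\norm{\QW(g\circ p_x)}\le\supnorm{g}$ comes for free. You instead assert that a $*$-homomorphism defined on the dense $*$-subalgebra $\S(\R)\subseteq C_0(\R)$ ``extends to a sup-norm-contractive $*$-homomorphism''; this is the one step you leave unjustified, and it is not automatic, since a $*$-homomorphism from a dense $*$-subalgebra of a C*-algebra into a C*-algebra need not be bounded for the ambient norm in general. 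It does hold here because $\S(\R)+\C$ is inverse-closed in $C_0(\R)+\C$: for real $g\in\S(\R)$ and $\lambda\notin\overline{g(\R)}\cup\{0\}$ one has $(g-\lambda)^{-1}=-\lambda^{-1}+\lambda^{-1}g\cdot(g-\lambda)^{-1}\in\C+\S(\R)$, so the spectral radius of a self-adjoint element computed in $\S(\R)+\C$ equals $\supnorm{g}$ and contractivity of your $*$-homomorphism follows. If you supply that remark (or simply replace it by the paper's $C^*(\R)$ bound), your argument is complete.
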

    \begin{proof}
    	Define $\rho:L^1(\R)\rightarrow B(\F(\overline{X}))$ by $\rho(f):=\int\dslash{t}f(t)e^{it\phi(x)}$. Let $C^*(\R)$ be the group C*-algebra of $\R$, with associated norm $\nrm{\cdot}{*}$. The definition of $\nrm{\cdot}{*}$ in particular implies $\norm{\rho(f)}\leq\nrm{f}{*}$, giving us a continuous extension $\rho: C^*(\R)\rightarrow B(\F(\overline{X}))$. It is known that the Fourier transform $\hat{\cdot}:C_0(\R)\rightarrow C^*(\R)$ is continuous, and therefore $\rho(\hat{\cdot}):C_0(\R)\rightarrow B(\F(\overline{X}))$ is continuous as well. Since $\rho(\hat{g})=\QW(g\circ p_x)$ ($g\in\S(\R)$), we are left to show that $\rho(\hat{\cdot})$ coincides with the functional calculus of $\phi(x)$. With the basic rules for Fourier transforms, $\rho(\hat{\cdot})$ can be shown to be a *-homomorphism on a dense subset, and hence on all of $C_0(\R)$.  Furthermore, from \cite[Corollary 4.4]{BG2008} it straightforwardly follows that $\rho((1/(i\lambda - \cdot))\hat{~})=(i\lambda-\phi(x))^{-1}=R(\lambda,x)$. This completes the proof.
    \end{proof}

\begin{prop}\label{prop: QW *-subalgebra}
	The set $\QW(\Sr(X))$ is a *-subalgebra of $B(H)$.
\end{prop}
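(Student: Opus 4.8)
The plan is to verify the three defining closure properties of a $*$-subalgebra. Closure under linear combinations is immediate, since $\Sr(X)$ is a vector space and $\QW$ is linear. For the involution I would show directly that $\QW$ is $*$-preserving. Using that each $e^{i\phi(x)}$ is unitary with $(e^{i\phi(x)})^*=e^{i\phi(-x)}$, together with the adjoint rule for the Pettis integrals of Section \ref{sct: Weyl Quantization}, one gets
\[
\QW(g\circ P_V)^*=\int_V\dslash x\,\overline{\hat g(x)}\,e^{i\phi(-x)}\,.
\]
Substituting $x\mapsto-x$ (which leaves $\dslash x$ invariant) and using the Fourier reflection identity $\overline{\hat g(-x)}=\widehat{\bar g}(x)$ turns the right-hand side into $\QW(\bar g\circ P_V)$. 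Since $\bar g\in\S(V)$ whenever $g\in\S(V)$, this shows $\QW(f)^*=\QW(\bar f)\in\QW(\Sr(X))$ for levees, and hence for all of $\Sr(X)$ by linearity.

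The substance of the proposition is closure under multiplication. By bilinearity it suffices to treat a single product $\QW(g_1\circ P_{V_1})\QW(g_2\circ P_{V_2})$. Writing both factors as Pettis integrals, combining them into a double integral over $V_1\times V_2$, and invoking the Weyl relation $e^{i\phi(x)}e^{i\phi(y)}=e^{-\tfrac{i\hbar}{2}\sigma(x,y)}e^{i\phi(x+y)}$, I would bring the product into the form
\[
\int_{V_1}\int_{V_2}\dslash x\,\dslash y\;\hat g_1(x)\hat g_2(y)\,e^{-\tfrac{i\hbar}{2}\sigma(x,y)}\,e^{i\phi(x+y)}\,.
\]
The goal is to recognise this as $\QW(g\circ P_W)$ with $W:=V_1+V_2$, for a suitable $g\in\S(W)$.

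To do so I would push the double integral forward along $(x,y)\mapsto x+y$. Using the decomposition $W=U_1\oplus U_2\oplus U_3$ with $U_3=V_1\cap V_2$, $V_1=U_1\oplus U_3$, $V_2=U_2\oplus U_3$ from Lemma \ref{lem:product of levees}, I parametrise the fibre over $z=z_1+z_2+z_3\in W$ by $u\in U_3$ via $x=z_1+u$, $y=z_2+z_3-u$. This change of variables has unit Jacobian, and the $\dslash{}$-normalisation is arranged precisely so that $\dslash x\,\dslash y=\dslash z\,\dslash u$ with no residual constant (the powers of $2\pi$ match because $\dim W+\dim U_3=\dim V_1+\dim V_2$). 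Integrating out the fibre then yields $\int_W\dslash z\,\hat g(z)\,e^{i\phi(z)}$ with
\[
\hat g(z)=\int_{U_3}\dslash u\;\hat g_1(z_1+u)\,\hat g_2(z_2+z_3-u)\,e^{-\tfrac{i\hbar}{2}\sigma(z_1+u,\,z_2+z_3-u)}\,,
\]
a twisted convolution of $\hat g_1$ and $\hat g_2$, which identifies the product with $\QW(g\circ P_W)$ for $g:=(\hat g)^\vee$, provided $g\circ P_W\in\Sr(X)$.

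The main obstacle is exactly this last regularity check: one must verify $\hat g\in\S(W)$, equivalently $g\in\S(W)$. Writing $a:=z_1+u\in V_1$ and $b:=z_2+z_3-u\in V_2$, so that $a+b=z$, rapid decay follows because $|a|+|b|\ge|a+b|=|z|$ forces the integrand to be negligible unless $z$ is small, so the Schwartz decay of $\hat g_1\hat g_2$ transfers to $\hat g$; the unimodular phase does not affect the modulus. Control of derivatives is, as in Lemma \ref{lem:product of levees}(ii), a matter of the general Leibniz rule: differentiating in $z$ produces Schwartz factors from $\hat g_1,\hat g_2$ together with factors linear in $(z,u)$ coming from the phase, the $z$-linear factors being harmless against the rapid decay and the $u$-linear factors being absorbed by the decay of $\hat g_1,\hat g_2$ in the fibre integral. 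Once $g\in\S(W)$ is established, $g\circ P_W$ is a levee in $\Sr(X)$, so the product lies in $\QW(\Sr(X))$, completing the proof.
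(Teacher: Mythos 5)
Your proposal is correct and takes essentially the same route as the paper: the product is written as a double integral, the Weyl relation is applied, and the decomposition $V_1+V_2=U_1\oplus U_2\oplus U_3$ turns the result into $\QW(g\circ P_{V_1+V_2})$ with $\hat g$ a twisted convolution of $\hat g_1$ and $\hat g_2$, whose Schwartz bounds are checked via the Leibniz rule. You merely spell out in more detail the steps the paper leaves implicit (the adjoint computation, the unit Jacobian and $2\pi$-bookkeeping in the fibre integration, and the decay estimates), which is fine.
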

\begin{proof}
	Let $V_1,V_2\subseteq X$ be finite dimensional subspaces, and $g_i\in\S(V_i)$. Decompose $V_1+V_2=U_1\oplus U_2\oplus U_3$ for linear $U_i\subseteq X$ with $V_1=U_1\oplus U_3$, $V_2=U_2\oplus U_3$. 
	We find that
	\begin{align*}
		\QW(g_1\circ P_{V_1})\QW(g_2\circ P_{V_2})&=\int_{V_1}\dslash{x}\!\!\int_{V_2}\dslash{y}\hat{g_1}(x)\hat{g_2}(y)e^{\frac{-i\hbar}{2}\sigma(x,y)}e^{i\phi(x+y)}\\
		&=\int_{U_1}\!\!\dslash{u_1}\!\!\!\int_{U_2}\!\!\dslash{u_2}\!\!\!\int_{U_3}\!\!\dslash{u_3}\hat{g}(u_1+u_2+u_3)e^{i\phi(u_1+u_2+u_3)},
	\end{align*}
	where we have defined, for $u_i\in U_i$,
	\begin{align*}
		\hat{g}(u_1+u_2+u_3):=\int_{U_3}\dslash{u_3'}\hat{g_1}(u_1+u_3-u_3')\hat{g_2}(u_1+u_3')e^{\frac{-i\hbar}{2}\sigma(u_1+u_3-u_3',u_2+u_3')}\,.
	\end{align*}
	By bounding the Schwartz norms of $\hat{g}$ with respect to an appropriate basis, one finds that $\hat{g}\in\S(V_1+V_2)$ and therefore $g\circ P_{V_1+V_2}\in\Sr(X)$. Hence $$\QW(g_1\circ P_{V_1})\QW(g_2\circ P_{V_2})=\QW(g\circ P_{V_1+V_2})\in\QW(\Sr(X))\,.$$  One easily sees that $\QW(\overline{g\circ P_V})=\QW(g\circ P_V)^*$, so the proposition follows.
\end{proof}

\begin{thm}\label{thm:QW(Sr) dense in mR}
	We have $\overline{\QW(\Sr(X))}=\mR(X,\sigma)$, for every $\hbar\neq0$.
\end{thm}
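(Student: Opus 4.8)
The plan is to prove the two inclusions separately, with $\mR(X,\sigma)\subseteq\overline{\QW(\Sr(X))}$ being the easy one. Each generator $R(\lambda,x)$ equals $g^\lambda(\phi(x))$, which by Proposition \ref{lem:Phi continuous *-hom} is the norm-limit of the operators $\QW(g_n\circ p_x)=g_n(\phi(x))$ for any sequence $\S(\R)\ni g_n\to g^\lambda$ in $C_0(\R)$; here I use that the functional calculus is norm-contractive, so $\supnorm{g_n-g^\lambda}\to0$ forces convergence in $B(\F(\overline X))$. Hence every generator, and therefore all of $\mR(X,\sigma)$, lies in $\overline{\QW(\Sr(X))}$.

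For the reverse inclusion $\overline{\QW(\Sr(X))}\subseteq\mR(X,\sigma)$ it suffices, since $\mR(X,\sigma)$ is closed, to show that $\QW(g\circ P_V)\in\mR(X,\sigma)$ for every levee with $g\in\S(V)$. I would fix an orthonormal basis $v_1,\dots,v_k$ of $V$ and write points of $V$ in these coordinates. Iterating Proposition \ref{prop: QW *-subalgebra} (equivalently, reordering the Weyl unitaries via $e^{i\phi(x)}e^{i\phi(y)}=e^{-\frac{i\hbar}2\sigma(x,y)}e^{i\phi(x+y)}$) gives, for $g_1,\dots,g_k\in\S(\R)$,
\begin{align*}
    \prod_{i=1}^k\QW(g_i\circ p_{v_i})=\prod_{i=1}^k g_i(\phi(v_i))=\QW(\tilde g\circ P_V),
\end{align*}
where the Fourier transform of $\tilde g$ is $\hat{\tilde g}(s)=\big(\prod_i\hat{g_i}(s_i)\big)\,e^{-\frac{i\hbar}2\sum_{i<j}s_is_j\sigma(v_i,v_j)}$. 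The left-hand product lies in $\mR(X,\sigma)$ by Proposition \ref{lem:Phi continuous *-hom}, so every such $\QW(\tilde g\circ P_V)$ does too.

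It then remains to see that these special $\tilde g$ suffice. The factor $\Phi(s):=e^{-\frac{i\hbar}2\sum_{i<j}s_is_j\sigma(v_i,v_j)}$ is smooth, unimodular, and has derivatives of at most polynomial growth, so multiplication by $\Phi$ (and by $\Phi^{-1}=\overline\Phi$) is a topological isomorphism of $\S(\R^k)$. Given an arbitrary $g\in\S(V)$, the function $\hat g\,\overline\Phi$ lies in $\S(\R^k)$ and can be approximated, in the Schwartz topology, by finite sums of elementary tensors $\sum_\ell\prod_i\hat{g_i^{(\ell)}}$, since $\S(\R)^{\otimes k}$ is dense in $\S(\R^k)$. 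Multiplying back by $\Phi$ shows that $\hat g$ is a Schwartz-limit of finite sums $\sum_\ell\hat{\tilde g^{(\ell)}}$ of the special type above. Schwartz convergence of the Fourier transforms implies convergence in $\nrm{\cdot}{1}$, so by the estimate \eqref{QW bounded by L1-norm} the corresponding operators converge in norm to $\QW(g\circ P_V)$. As each approximant lies in $\mR(X,\sigma)$ and $\mR(X,\sigma)$ is closed, I conclude $\QW(g\circ P_V)\in\mR(X,\sigma)$, and linear extension over $\Sr(X)$ finishes this inclusion.

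The main obstacle is precisely this last reduction: a priori $\QW(g\circ P_V)$ is assembled from the Weyl unitaries $e^{i\phi(x)}$, which are generally \emph{not} elements of $\mR(X,\sigma)$, so one cannot place it in the resolvent algebra directly. The device that resolves this is to recognise the image of the (noncommutative, $\hbar$-twisted) product of one-dimensional levees as an honest product of functional-calculus elements $g_i(\phi(v_i))$, and then to exploit that the twisting is implemented on the Fourier side by multiplication with a fixed unimodular quadratic phase, which is invertible on $\S$ and hence does not shrink the collection of attainable functions $\tilde g$.
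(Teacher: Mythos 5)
Your proof is correct and reaches the theorem by a genuinely different organization of the same ingredients. For the inclusion $\mR(X,\sigma)\subseteq\overline{\QW(\Sr(X))}$ you argue as the paper does, except that passing from ``the closure contains every generator $R(\lambda,x)$'' to ``the closure contains $\mR(X,\sigma)$'' requires knowing that $\overline{\QW(\Sr(X))}$ is a C*-algebra, i.e.\ Proposition \ref{prop: QW *-subalgebra}; cite it at that point. For the reverse inclusion the paper proceeds by induction on $\dim V$: it peels off one basis direction at a time, writing $\QW(g\circ P_V)=g_1(\phi(v))\int_{\{v\}^\perp}\dslash{w}\hat{g_2}(w)e^{i\phi(w)}$ after absorbing the commutation phase into the assumption that $\hat{g}f$ is an elementary tensor, and applies the induction hypothesis to the remaining $(k-1)$-dimensional integral. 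You instead treat all $k$ directions at once: you identify $\prod_i g_i(\phi(v_i))$ as $\QW(\tilde g\circ P_V)$ with the explicit quadratic phase $\Phi$ on the Fourier side, and then use that multiplication by the unimodular tempered phase $\Phi$ is a topological automorphism of $\S(\R^k)$, so the special $\tilde g$'s span a Schwartz-dense, hence by \eqref{QW bounded by L1-norm} operator-norm-dense, subspace. The key ingredients (Proposition \ref{lem:Phi continuous *-hom}, density of $\S(\R)^{\otimes k}$ in $\S(\R^k)$, the $L^1$ bound) are identical, but your version avoids the induction and makes the twisting explicit and invertible, which is arguably cleaner; the paper's inductive peeling only ever has to name the one-dimensional phase correction. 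One small point you should make explicit: Proposition \ref{lem:Phi continuous *-hom} only identifies $\QW(g_i\circ p_{v_i})$ with $g_i(\phi(v_i))$; to conclude $g_i(\phi(v_i))\in\mR(X,\sigma)$ you still need that the continuous functional calculus of $\phi(v_i)$ maps $C_0(\R)=C^*(g^\lambda\mid\lambda\in\R\setminus\{0\})$ into the C*-algebra generated by the resolvents $R(\lambda,v_i)$ --- the Stone--Weierstrass step the paper spells out at the end of its proof.
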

\begin{proof}
	Since $\QW(\Sr)$ is a *-algebra, we want its closure to contain $R(\lambda,x)$. Take a sequence $(g_j)$ in $\S(\R)$ converging uniformly to $1/(i\lambda-\cdot)$. Then $g_j(\phi(x))$ converges to $R(\lambda,x)$, and therefore $R(\lambda,x)\in\overline{\QW(\Sr)}$ by Proposition \ref{lem:Phi continuous *-hom}. By Proposition \ref{prop: QW *-subalgebra} we may conclude that $\mR(X,\sigma)\subseteq\overline{\QW(\Sr)}$.

	 We are left to show that $\QW(g\circ P_V)\in\mR(X,\sigma)$ for every levee $g\circ P_V\in\Sr(X)$. We do this by induction in $\dim V$. We choose a unit vector $v\in V$ and write elements of $V$ as $tv+w$, for $t\in\R$ and $w\perp v$. Let $f:V\rightarrow S^1$ be the function such that $e^{i\phi(tv+w)}=f(tv+w)e^{it\phi(v)}e^{i\phi(w)}$. Notice that the span of functions of the form
		 $$tv+w\mapsto g_1(t)g_2(w)\qquad (t\in\R,w\perp v)$$
	 lies dense in $\S(V)$ with respect to the Schwartz topology. Because of \eqref{QW bounded by L1-norm}, it suffices to assume that $\hat{g}f$ is of this form, and we therefore write $(\hat{g}f)(tv+w)=\hat{g_1}(t)\hat{g_2}(w)$ for some $g_1\in\S(\R)$, $g_2\in\S(\{v\}^\perp)$. By virtue of Proposition \ref{lem:Phi continuous *-hom} we find that
	 \begin{align*}
	 	\QW(g\circ P_V)&=\int_{\R}\dslash{t}\int_{\{v\}^\perp}\dslash{w}\hat{g_1}(t)\hat{g_2}(w)e^{it\phi(v)}e^{i\phi(w)}\\
		&=g_1(\phi(v))\int_{\{v\}^\perp}\dslash{w}\hat{g_2}(w)e^{i\phi(w)}\,.
	 \end{align*}
	By the induction hypothesis the latter integral is in $\mR(X,\sigma)$, and by the Stone--Weierstrass theorem we can approximate $g_1$ by polynomials in $1/(i\lambda-\cdot)$. Functional calculus now gives $g_1(\phi(v))\in\mR(X,\sigma)$, thus proving that $\QW(g\circ P_V)\in\mR(X,\sigma)$.
\end{proof}

We will now relate our generalization of Weyl quantization $\QW$ to the usual finite dimensional Weyl quantization, and show why this gives us a strict deformation quantization.

By usual convention, Weyl quantization of a suitable function $f:\R^{2n}\rightarrow\C$ is
\begin{align}\label{QR}
\QR(f):=\int\dslash{x}\hat{f}(x)e^{i\sum(x_jP_j+x_{n+j}Q_j)}\,,
\end{align}
where $P_j\psi(y)=-i\hbar\frac{\partial\psi}{\partial y_j}$ and $Q_j\psi(y)=y_j\psi(y)$. 

Rieffel, in his memoir \cite{Rieffel1993}, defines a very broad generalization of Weyl quantization, and, in Chapter 9, discusses strict deformation quantization. In particular, as is written in \cite[Chapter 1]{Rieffel1994}, \eqref{QR} is defined and determines a strict deformation quantization of $\mB(\R^{2n})$, where by definition $f\in\mB(\R^{2n})$ is a smooth function all of whose derivatives of all degrees are bounded.
In that same chapter, Rieffel writes $\overline{\mB_\hbar}$ for the completion of $\mB$ with respect to some new C*-norm $\nrm{\cdot}{\hbar}$ and equiped with a different product $\times_\hbar$. Around \cite[equation (1.3)]{Rieffel1994}, Rieffel shows that a certain map $\overline{\mB_\hbar}\rightarrow B(L^2(\R^n))$, $f\mapsto L_f$ is a *-representation, and that the inclusion $\mB\hookrightarrow\overline{\mB_\hbar}$ is a strict deformation quantization. What we call $\QR$ is actually the composition of this inclusion and $f\mapsto L_f$.

We will now relate the strict deformation quantization map $\QR$ to our $\QW$. Let us fix a continuous surjective partial isometry $p:X\rightarrow\R^{2n}$, as we have done in Section \ref{sct: Poisson Structure}. Now $(\ker p)^\perp\rightarrow B(\F(\overline{X}))$, $x\mapsto e^{i\phi(x)}$ is a representation of the canonical commutation relations in exponential form, so by the Stone--von Neumann theorem there exists a subspace $W\subseteq \F(\overline{X})$ invariant under $\{e^{i\phi(x)}\}_{x\perp\ker p}$, together with a unitary $U:W\rightarrow L^2(\R^n)$ such that
\begin{align}\label{Stone von Neumann}
	e^{i\phi(x)}=U^*e^{i\sum p(x)_jP_j+p(x)_{n+j}Q_j}U\,.
\end{align}
Now for every $f\in\Sr(\R^{2n})\subseteq\mB(\R^{2n})$, we have
\begin{align}
	\QW(f\circ p)&=\int_{(\ker p)^\perp}\dslash{x}\hat{f}(px)e^{i\phi(x)}\nonumber\\
	&=U^*\int_{\R^{2n}}\dslash{x}\hat{f}(x)e^{i\sum x_jP_j+x_{n+j}Q_j}U\equiv U^*\QR(f)U\,.\label{QW vs QR}
\end{align}

This link between $\QW$ and $\QR$ can now be used to prove this paper's most crucial result.

\begin{thm}\label{thm: sdq}
	Let ${A_0:=\Cr(X)}$ and ${A_\hbar:=\mR(X,\sigma)}$ for ${\hbar\neq0}$. Then ${I=\R}$, together with the collection of C*-algebras $\{A_\hbar\}_{\hbar\in I}$, and the maps $\QW:\Sr(X)\rightarrow\mR(X,\sigma)$, constitute a strict deformation quantization of $\Sr(X)$.
\end{thm}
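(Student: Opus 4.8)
The plan is to verify each clause of Definition \ref{Def: sdq} for $\tilde A_0=\Sr(X)$, reducing the analytic conditions (\ref{sdq1})--(\ref{sdq3}) to Rieffel's theorem for $\QR$ on $\R^{2n}$ through the intertwining relation \eqref{QW vs QR}.

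First I would assemble the structural prerequisites. That $\Sr(X)$ is a complex Poisson *-algebra densely contained in $A_0=\Cr(X)$ is Proposition \ref{prop: Finite Sr is dense in Cr} together with Section \ref{sct: Poisson Structure}, and the identity $\{f,g\}^*=\{f^*,g^*\}$ is inherited from $C^\infty(\R^{2n})$; the index set $I=\R$ trivially satisfies $0\in I\cap\overline{I\setminus\{0\}}$. That each $\QW$ (for $\hbar\neq0$) is linear and *-preserving, and that $\QW(\Sr(X))$ is a dense *-subalgebra of $A_\hbar=\mR(X,\sigma)$, is Proposition \ref{prop: QW *-subalgebra} and Theorem \ref{thm:QW(Sr) dense in mR}; setting $Q_0=\mathrm{id}$ is by definition. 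The only remaining discrete requirement, injectivity of $\QW$, I would obtain from the finite-dimensional reduction below.

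The core of the argument is this reduction. Given a pair $f,g\in\Sr(X)$, each a finite sum of levees, I would choose a single surjective continuous partial isometry $p:X\to\R^{2n}$ (with $2n$ large enough) such that $(\ker p)^\perp$ contains every subspace occurring in $f$ and $g$, exactly as in Section \ref{sct: Poisson Structure}. This lets me write $f=\tilde f\circ p$, $g=\tilde g\circ p$ with $\tilde f,\tilde g\in\Sr(\R^{2n})\subseteq\mB(\R^{2n})$, and crucially $fg=(\tilde f\tilde g)\circ p$ and, by the very definition of the bracket, $\{f,g\}=\{\tilde f,\tilde g\}_{2n}\circ p$. The relation \eqref{QW vs QR} intertwines $\QW(\,\cdot\circ p)$ with $\QR(\,\cdot\,)$, and I would upgrade it from the single invariant subspace $W$ to a full multiplicity decomposition: by Stone--von Neumann the representation $x\mapsto e^{i\phi(x)}$ of the CCR over $(\ker p)^\perp$ on $\F(\overline X)$ is unitarily equivalent to a multiple $e^{i\sum(\cdots)}\otimes 1_{\mathcal K}$ of the Schr\"odinger representation, whence $\QW(\tilde h\circ p)\cong\QR(\tilde h)\otimes 1_{\mathcal K}$ for every $\tilde h$. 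This yields the norm identity $\|\QW(\tilde h\circ p)\|_\hbar=\|\QR(\tilde h)\|_\hbar$ for all $\hbar$, and, because the tensor factor $1_{\mathcal K}$ commutes through products and commutators, it transports differences of products and of commutators verbatim.

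With this dictionary, the conditions follow from Rieffel applied to $\tilde f,\tilde g\in\mB(\R^{2n})$. For (\ref{sdq1}), $\hbar\mapsto\|\QW(f)\|_\hbar=\|\QR(\tilde f)\|_\hbar$ is continuous on $\R$; at $\hbar=0$ I would match $\|Q_0(f)\|_0=\|f\|_\infty=\|\tilde f\|_\infty$ (using surjectivity of $p$) with $\lim_{\hbar\to0}\|\QR(\tilde f)\|_\hbar=\|\tilde f\|_\infty$. For (\ref{sdq2}) and (\ref{sdq3}), the quantities $\|\QW(f)\QW(g)-\QW(fg)\|_\hbar$ and $\|\tfrac{i}{\hbar}[\QW(f),\QW(g)]-\QW(\{f,g\})\|_\hbar$ equal the corresponding expressions for $\QR(\tilde f),\QR(\tilde g)$, which vanish as $\hbar\to0$. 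Injectivity drops out too: if $\QW(f)=0$ then $\QR(\tilde f)=0$, so $\tilde f=0$ by injectivity of Rieffel's quantization, hence $f=\tilde f\circ p=0$. The main obstacle is precisely the passage from the single-subspace intertwiner \eqref{QW vs QR} to its multiplicity form --- confirming that the Fock representation of the CCR over $(\ker p)^\perp$ is a (possibly infinite) multiple of the Schr\"odinger representation, so that norms, products and commutators in $B(\F(\overline X))$ agree exactly with those in $B(L^2(\R^n))$ --- together with checking that the choice of $p$ is immaterial, which rests on the $p$-independence of the bracket asserted in Section \ref{sct: Poisson Structure}.
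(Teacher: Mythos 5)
Your proposal is correct and follows essentially the same route as the paper: reduce conditions \eqref{sdq1}--\eqref{sdq3} and injectivity to Rieffel's results for $\QR$ on $\mB(\R^{2n})$ via a partial isometry $p$ and the intertwining relation \eqref{QW vs QR}, and obtain the density of $\QW(\Sr)$ in $A_\hbar$ from Proposition \ref{prop: Finite Sr is dense in Cr}, Proposition \ref{prop: QW *-subalgebra} and Theorem \ref{thm:QW(Sr) dense in mR}. Your extra care in upgrading \eqref{QW vs QR} to a multiplicity (direct-sum) form of the Stone--von Neumann theorem, so that norms on $B(\F(\overline{X}))$ genuinely match those on $B(L^2(\R^n))$, is a detail the paper leaves implicit and is a worthwhile addition.
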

\begin{proof}
	We already know that $\QW$ is linear and *-preserving. For checking injectivity and \eqref{sdq1}, \eqref{sdq2} and \eqref{sdq3} of Definition \ref{Def: sdq}, we choose $f,g\in\Sr(X)$. Then we can find a surjective continuous partial isometry $p$ with $f,g\in p^*\Sr(\R^{2n})$, and apply \eqref{QW vs QR}.
	 
	 The last remaining requirement of Definition \ref{Def: sdq} is that $\QW(\Sr)$ is a dense *-subalgebra of $A_\hbar$. This is exactly the statement that we have worked towards. For $\hbar=0$ it follows from Proposition \ref{prop: Finite Sr is dense in Cr}, and for $\hbar\neq0$ it is a combination of Proposition \ref{prop: QW *-subalgebra} and Theorem \ref{thm:QW(Sr) dense in mR}.
\end{proof}

This result rigorously establishes the commutative resolvent algebra as the classical limit of the resolvent algebra. It is a welcome fact that quantization in the setting of the resolvent algebra can be done with Weyl quantization, about which much is known, also in the infinite dimensional case \cite{BHR,Weaver}.

We will now prove a similar result for Berezin quantization, which allows us to quantize the entire commutative resolvent algebra.

\subsection{Berezin quantization}\label{sct: Berezin Quantization}

Let $\hbar>0$. Define, for an arbitrary levee $g\circ P_V\in\Sr(X)$, its Berezin quantization $\QB(g\circ P_V)$ by
\begin{align}\label{QB}
	\QB(g\circ P_V):=\int_V\dslash{x}e^{-\frac{\hbar}{4}\norm{x}^2}\hat{g}(x)e^{i\phi(x)}\,.
\end{align}
The discussion after Lemma \ref{lem: f^V+w constant ae} justifies us in linearly extending this map. 

Using the partial isometry $p:X\rightarrow\R^{2n}$ and the unitary $U:W\rightarrow L^2(\R^{n})$ from before (satisfying \eqref{Stone von Neumann}), it can be shown that the operator $U\QB(f\circ p)U^*$ coincides with the Berezin quantization of $f\in \Sr(\R^{2n})\subseteq L^\infty(\R^{2n})$ as defined by Landsman in \cite[Section II.2.3]{KL98}. As a consequence of \cite[Theorem II.1.3.5]{KL98}, $\QB$ uniquely extends to a continuous positive linear map $$\QB:\Cr(X)\rightarrow B(\F(\overline{X}))\,.$$

Let us introduce the space $\hat{\D}(V)$ of Schwartz functions $f\in\S(V)$ of which the Fourier transform $\hat{f}$ is compactly supported. We also need the associated space
	$$\Dhr(X):=\spn\left\{g\circ P\textnormal{ levee}\mid g\in\hat{\D}(\ran P) \right\}\,.$$
We define the suggestively written operator $e^{\frac{\hbar}{4}\Updelta}:\Dhr(X)\rightarrow\Dhr(X)$ by linear extension of
\begin{align}\label{exp laplace}
	e^{\frac{\hbar}{4}\Updelta}(g\circ P):= (e^{-\frac{\hbar}{4}\norm{\cdot}^2}\hat{g})\check{~}\circ P\,,
\end{align}
where $\check{~}$ denotes the inverse Fourier transform. The notation $e^{\frac{\hbar}{4}\Updelta}$ is justified when $X=\R^m$ and $\Updelta=\sum_j\partial_j^2$ is the Laplace operator. It should be clear that $e^{\frac{\hbar}{4}\Updelta}$ is a bijection. Furthermore, \eqref{exp laplace} immediately gives us
\begin{align}\label{link Berezin and Weyl}
	\QB(f)=\QW(e^{\frac{\hbar}{4}\Updelta}f)\,,
\end{align}
for all $f\in\Dhr(X)$, and therefore $\QB(\Dhr(X))=\QW(\Dhr(X))$.
In fact, because $\hat{\D}$ lies dense in $\S$ with respect to the Schwartz topology, equation \eqref{QW bounded by L1-norm} implies that
	$$\overline{\QB(\Dhr(X))}=\overline{\QW(\Sr(X))}\,,$$
and we conclude, by Theorem \ref{thm:QW(Sr) dense in mR}, that
	$$\overline{\QB(\Dhr(X))}=\mR(X,\sigma)\,.$$

We now proceed to prove \eqref{sdq1} and \eqref{sdq2} for Berezin quantization.
\begin{lem}\label{lem: QB continuous in hbar>0}
	For any $f\in\Cr(X)$, the function $\hbar\mapsto \QB(f)$ is continuous on $(0,\infty)$.
\end{lem}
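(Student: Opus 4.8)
My plan is to read the statement as operator-norm continuity of $\hbar\mapsto\QB(f)\in B(\F(\overline X))$ (the Fock space is the same for every $\hbar$), and to prove it by first reducing to a single Schwartz levee and then stripping all $\hbar$-dependence off the operators by rescaling the integration variable in \eqref{QB}. First I would note that $\QB:\Cr(X)\to B(\F(\overline X))$ is positive and unital: the constant function $1$ is the levee with $V=\{0\}$, and \eqref{QB} then gives $\QB(1)=1$. A positive unital linear map out of a unital C*-algebra is contractive, so $\norm{\QB(f)}\le\supnorm{f}$ for \emph{every} $\hbar>0$, uniformly in $\hbar$. Hence, given $f\in\Cr(X)$ and $\epsilon>0$, Proposition \ref{prop: Finite Sr is dense in Cr} supplies $f'\in\Sr(X)$ with $\supnorm{f-f'}<\epsilon/3$, and for all $\hbar,\hbar_0>0$
\[
\norm{\QB(f)-Q_{\hbar_0}^B(f)}\le 2\supnorm{f-f'}+\norm{\QB(f')-Q_{\hbar_0}^B(f')}.
\]
By linearity it then suffices to establish norm-continuity of $\hbar\mapsto\QB(g\circ P_V)$ for a single Schwartz levee $g\circ P_V$, say with $\dim V=k$.

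For this core case the key device is a change of variables. Since $\phi(x)=\sqrt\hbar\,(a^*(x)+a(x))=\sqrt\hbar\,\Phi(x)$ with $\Phi(x):=a^*(x)+a(x)$ independent of $\hbar$, we have $e^{i\phi(x)}=e^{i\Phi(\sqrt\hbar\,x)}$, whose action is obtained from \eqref{e^(i phi(x)) in Fock rep} by setting $\hbar=1$. Substituting $y=\sqrt\hbar\,x$ in \eqref{QB}, with $\dslash{x}=\hbar^{-k/2}\dslash{y}$ and $\norm{x}^2=\norm{y}^2/\hbar$, renders the Gaussian $\hbar$-independent and yields
\[
\QB(g\circ P_V)=\int_V\dslash{y}\,s_\hbar(y)\,e^{i\Phi(y)},\qquad s_\hbar(y):=\hbar^{-k/2}e^{-\frac14\norm{y}^2}\hat{g}(y/\sqrt\hbar).
\]
The decisive gain is that the operators $e^{i\Phi(y)}$ no longer depend on $\hbar$: the entire $\hbar$-dependence now sits in the scalar symbol $s_\hbar\in L^1(V)$.

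To finish, I would observe that, because $\norm{e^{i\Phi(y)}}=1$, the fixed linear map $s\mapsto\int_V\dslash{y}\,s(y)e^{i\Phi(y)}$ is contractive from $L^1(V)$ into $B(\F(\overline X))$ (this is exactly the operator-integral convention of the section), whence
\[
\norm{\QB(g\circ P_V)-Q_{\hbar_0}^B(g\circ P_V)}\le\nrm{s_\hbar-s_{\hbar_0}}{1}.
\]
Now $s_\hbar(y)\to s_{\hbar_0}(y)$ pointwise by continuity of $\hat{g}$, and for $\hbar\in[\hbar_0/2,2\hbar_0]$ one has the $\hbar$-independent dominating bound $|s_\hbar(y)|\le(2/\hbar_0)^{k/2}\supnorm{\hat{g}}\,e^{-\frac14\norm{y}^2}\in L^1(V)$, so dominated convergence gives $\nrm{s_\hbar-s_{\hbar_0}}{1}\to0$ as $\hbar\to\hbar_0$.

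The one genuine pitfall, and the reason this is not entirely routine, is that one cannot estimate \eqref{QB} directly by moving the norm inside the integral: the integrand would then contain $\norm{e^{i\Phi(\sqrt\hbar\,x)}-e^{i\Phi(\sqrt{\hbar_0}\,x)}}$, the operator-norm distance between two coherent-state translations by $\sqrt\hbar\,x$ and $\sqrt{\hbar_0}\,x$, which stays bounded away from $0$ for large $x$ no matter how close $\hbar$ and $\hbar_0$ are; the naive bound therefore never tends to $0$. Freezing the operators by the substitution $y=\sqrt\hbar\,x$, and transferring all $\hbar$-dependence onto the rapidly decaying scalar symbol, is precisely what makes the elementary $L^1$-estimate sufficient. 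I expect the only points requiring care to be the uniform contractivity (the positive-unital reduction) and the bookkeeping of the change of variables.
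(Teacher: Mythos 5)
Your proposal is correct and is essentially the paper's own argument: the paper likewise freezes the $\hbar$-dependence of the unitaries by the rescaling $x\mapsto\sqrt{\hbar}\,x$ (phrased there as $\QW(f)=Q^W_1(f_\hbar)$ with $f_\hbar(x)=f(\sqrt{\hbar}x)$, applied to $e^{\frac{\hbar}{4}\Updelta}(g\circ P)$), pushes everything into an $L^1$ symbol controlled by \eqref{QW bounded by L1-norm}, and then passes to all of $\Cr(X)$ by density. Your version is, if anything, slightly more careful on two points the paper leaves implicit: you justify the density reduction by the uniform-in-$\hbar$ contractivity $\norm{\QB}=1$ (positivity plus unitality), which is genuinely needed since pointwise continuity of each $Q^B_\hbar$ alone would not suffice, and you replace the paper's appeal to Schwartz-continuity of $\hbar\mapsto g_\hbar$ by an explicit dominated-convergence estimate on $s_\hbar$.
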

\begin{proof}
	For a levee $g\circ P\in\Dhr(X)$ we have, as we have seen,
	\begin{align*}
		\QB(g\circ P)&=\QW((e^{-\frac{\hbar}{4}\norm{\cdot}^2}\hat{g})\check{~}\circ P)\,.
	\end{align*}
	Furthermore, $\QW(f)=Q^W_1(f_\hbar)$, when we define $f_\hbar(x):=f(\sqrt{\hbar}x)$. We obtain $$\QB(g\circ P)=Q^W_1((e^{-\frac{\hbar}{4}\norm{\cdot}^2}\hat{g})\check{~}\circ P)_\hbar)=Q^W_1((e^{-\frac{\norm{\cdot}^2}{4}}\hat{g_\hbar})\check{~}\circ P)\,.$$ A bit of analysis yields that $\hbar\rightarrow g_\hbar$ is Schwartz-continuous on $(0,\infty)$. Therefore, by \eqref{QW bounded by L1-norm}, we find that $\hbar\rightarrow\QB(g\circ P)$ is continous. By continuity of $\QB$ and density of $\Dhr(X)\subseteq\Cr(X)$, the lemma follows.
\end{proof}

It should be stressed that the continuity of $\hbar\mapsto \QB(f)$ rests on our specific definition of $\phi(x)$. For example, let $X=\R^2$, and introduce the quantum mechanical operators $Q\psi(x):=x\psi(x)$ and $P^\hbar\psi(x)=-i\hbar\frac{d}{dx}\psi(x)$ on $L^2(\R)$, the above result remains valid when setting $\phi(x):=\sqrt{\hbar}(x_1P^1+x_2Q)$ and defining $\QB$ through \eqref{QB}. If we use $\phi(x):=x_1P^\hbar+x_2Q$ in \eqref{QB}, we recover Landsman's definition of $\QB$ in \cite[Section II.2.3]{KL98}. These two possible definitions of $\phi(x)$ are related by an $\hbar$-dependent unitary transformation. One should be warned that, when using the latter formula for $\phi(x)$, Lemma \ref{lem: QB continuous in hbar>0} is no longer true. A similar comment applies to Weyl quantization.

\begin{lem}\label{Asymptotics Weyl and Berezin}
	Weyl and Berezin quantization are equivalent in the sense that, for all $f\in\Sr(X)$, the map $$\hbar\mapsto\norm{\QW(f)-\QB(f)}$$ is continuous on $I=[0,\infty)$.
\end{lem}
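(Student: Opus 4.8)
The plan is to reduce the statement to a difference of two explicit operator-valued integrals over a single finite-dimensional space $V$, bound the norm of this difference by an $L^1$-estimate on the Fourier side, and then establish continuity up to and including $\hbar=0$. By linearity it suffices to treat a single levee $g\circ P_V$ with $g\in\S(V)$, so I fix $V$ of dimension $r$ and compare
\begin{align*}
    \QW(g\circ P_V)-\QB(g\circ P_V)=\int_V\dslash{x}\big(1-e^{-\frac{\hbar}{4}\norm{x}^2}\big)\hat{g}(x)\,e^{i\phi(x)}\,.
\end{align*}
The integrand is a strongly continuous $B(H)$-valued map times the scalar density $\big(1-e^{-\frac{\hbar}{4}\norm{x}^2}\big)\hat{g}(x)$, so the integral is well defined exactly as in Section \ref{sct: Weyl Quantization}, and on $[0,\infty)$ it reduces at $\hbar=0$ to $\QW(g\circ P_V)-\QB(g\circ P_V)=0$ since the factor $1-e^{0}$ vanishes. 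This already pins down the value of the map at the endpoint.

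For the norm bound I would invoke the same estimate \eqref{QW bounded by L1-norm} that underlies all boundedness claims in this section, applied to the function whose Fourier transform is $\big(1-e^{-\frac{\hbar}{4}\norm{\cdot}^2}\big)\hat{g}$. This gives
\begin{align*}
    \norm{\QW(g\circ P_V)-\QB(g\circ P_V)}\leq\nrm{\big(1-e^{-\frac{\hbar}{4}\norm{\cdot}^2}\big)\hat{g}}{1}=\int_V\dslash{x}\big(1-e^{-\frac{\hbar}{4}\norm{x}^2}\big)\,|\hat{g}(x)|\,,
\end{align*}
using that the weight $1-e^{-\frac{\hbar}{4}\norm{x}^2}$ is real and lies in $[0,1)$ for $\hbar\geq0$. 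Continuity of the right-hand side in $\hbar$ on all of $[0,\infty)$ is then a clean application of dominated convergence: for $\hbar_n\to\hbar_0$ the integrands converge pointwise in $x$ by continuity of $\hbar\mapsto e^{-\frac{\hbar}{4}\norm{x}^2}$, and they are dominated uniformly by the fixed integrable majorant $|\hat{g}(x)|\in L^1(V)$ (since $g\in\S(V)$ forces $\hat g\in\S(V)\subseteq L^1$). Hence $\hbar\mapsto\nrm{\big(1-e^{-\frac{\hbar}{4}\norm{\cdot}^2}\big)\hat{g}}{1}$ is continuous and vanishes at $\hbar=0$.

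This controls the \emph{upper bound}, but the lemma asserts continuity of the actual norm $\hbar\mapsto\norm{\QW(f)-\QB(f)}$, not merely of a majorant, so the genuine work is to upgrade pointwise-in-$\hbar$ strong continuity of the integrand to norm continuity of the integral itself. The cleanest route is to observe that the \emph{difference} operator $\QW(f)-\QB(f)$ depends on $\hbar$ through two channels, the scalar weight and the field $\phi(x)$, and to factor out the latter. I would rescale as in the proof of Lemma \ref{lem: QB continuous in hbar>0}, writing $\QW(f)=Q^W_1(f_\hbar)$ with $f_\hbar(x)=f(\sqrt{\hbar}x)$, so that both quantizations are expressed through the single $\hbar$-independent map $Q^W_1$ applied to Schwartz-continuously varying arguments; norm continuity of $\QW(f)-\QB(f)$ on $(0,\infty)$ then follows from \eqref{QW bounded by L1-norm} together with Schwartz-continuity of $\hbar\mapsto f_\hbar$ and of the weight. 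The genuinely delicate point, and the one I expect to be the main obstacle, is continuity \emph{at the endpoint} $\hbar=0$: the rescaling $f_\hbar$ degenerates there and one cannot pass through $Q^W_1$ naively, so at $0$ one must fall back on the direct difference estimate above, whose majorant $\nrm{\big(1-e^{-\frac{\hbar}{4}\norm{\cdot}^2}\big)\hat{g}}{1}\to0$ shows $\norm{\QW(f)-\QB(f)}\to0=\norm{\QW(f)-\QB(f)}|_{\hbar=0}$, securing continuity on the closed half-line $I=[0,\infty)$.
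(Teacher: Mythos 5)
Your proposal is correct and takes essentially the same route as the paper's own proof: both write the difference as $\QW$ applied to the function whose Fourier transform is $\big(1-e^{-\frac{\hbar}{4}\norm{\cdot}^2}\big)\hat{g}$, use the bound \eqref{QW bounded by L1-norm} together with $L^1$-continuity of the Gaussian weight (dominated convergence) to settle the endpoint $\hbar=0$, and fall back on the rescaling argument of Lemma \ref{lem: QB continuous in hbar>0} for continuity on $(0,\infty)$. Your explicit flagging of the distinction between continuity of the majorant and continuity of the actual norm is a welcome clarification of a step the paper leaves terse, but it is not a different method.
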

\begin{proof}
	For a levee $g\circ P\in\Sr(X)$ we have
		$$\QB(g\circ P)-\QW(g\circ P)=\QW((e^{-\frac{\hbar}{4}\norm{\cdot}^2}\hat{g}-\hat{g})\check{~}\circ P)\,.$$
	Because $\hbar\mapsto e^{-\frac{\hbar}{4}\norm{\cdot}^2}\hat{g}$ is $L^1$-continuous, the bound \eqref{QW bounded by L1-norm} implies that we have $$\norm{\QB(g\circ P)-\QW(g\circ P)}\rightarrow0$$ as $\hbar\rightarrow0$. For $\hbar>0$, we can apply an argument similar to the proof of Lemma \ref{lem: QB continuous in hbar>0}.
\end{proof}

The final result of this section gives the C*-algebraic relation between the commutative resolvent algebra and the resolvent algebra.
\begin{thm}\label{thm:Berezin}
Let $(X,\sigma)$ be a symplectic vector space compatible with a hermitian structure. Let $\tilde A_0:=\Dhr(X)$, $A_0=\CR(X)$ and $A_\hbar:=\mR(X,\sigma)$ for $\hbar\neq0$. Then $I=\R$, together with the collection of C*-algebras $\{A_\hbar\}_{\hbar\in I}$, and the maps $$Q^B_\hbar:\Dhr(X)\to\mR(X,\sigma),$$
constitute a strict deformation quantization of $\Dhr(X)$. 
Moreover, $Q^B_\hbar$ uniquely extends to a continuous linear map $$Q^B_\hbar:\CR(X)\to\mR(X,\sigma),$$ which is positive and injective. The latter map is not surjective. In fact, for finite dimensional $X$, not all compact operators are in $\QB(C_0(X))$.
\end{thm}
\begin{proof}
	We have already seen that $\QB$ is positive and continuous, and that $\overline{\QB(\Dhr(X))}=\mR(X,\sigma)$. The proof that $\QB(\Dhr(X))$ is a *-algebra is analogous to the proof of Proposition \ref{prop: QW *-subalgebra}. Concerning \eqref{sdq1}, we note that continuity of $\hbar\mapsto\QB(f)$ for $\hbar>0$ is achieved by Lemma \ref{lem: QB continuous in hbar>0}, and because $\lim_{\hbar\rightarrow0}\norm{\QW(f)}=\supnorm{f}$, the same holds for $\QB$ by Lemma \ref{Asymptotics Weyl and Berezin}. Similarly, \eqref{sdq2} holds for $\QW$, so by Lemma \ref{Asymptotics Weyl and Berezin} also for $\QB$. To obtain \eqref{sdq3} for $f,g\in\Dhr(X)$, one writes $f=f_n\circ p$, $g=g_n\circ p$ for a surjective continuous partial isometry $p:X\rightarrow\R^{2n}$ as we have done in \textsection\ref{sct: Poisson Structure}. One then calculates the Fourier transforms of $e^{\frac{\hbar}{4}\Updelta}\{f,g\}=e^{\frac{\hbar}{4}\Updelta}\{f_n,g_n\}_{2n}\circ p$ and $\{e^{\frac{\hbar}{4}\Updelta}f,e^{\frac{\hbar}{4}\Updelta}g\}=\{e^{\frac{\hbar}{4}\Updelta}f_n,e^{\frac{\hbar}{4}\Updelta}g_n\}_{2n}\circ p$, to see that the respective functions on $\R^{2n}$ get arbitrarily close in $L^1$-norm as $\hbar\rightarrow0$.
	
We are left to prove injectivity and the two final statements concerning non-surjectivity. The following discussion is quite technical and was missed in an earlier version of this paper.\footnote{See also the corrigendum to the published version [T. D. H. van Nuland (2019). Quantization and the Resolvent Algebra. \textit{J. Funct. Anal.} \textbf{277}, issue 8, pages 2815--2838].} The author thanks Lorenzo Pettinari for pointing this out and substantially contributing to the remainder of this proof.
	
	\paragraph{Injectivity.} Injectivity of $\QB:\Dhr(X)\to\mR(X,\sigma)$ is easily obtained: Because $\QW:\Sr(X)\to\mR(X,\sigma)$ is injective, and $e^{\frac{\hbar}{4}\Updelta}:\Dhr(X)\to\Sr(X)$ is injective, its composition $\QB:\Dhr(X)\to\mR(X,\sigma)$ is injective as well (note that we could also have defined $\QB$ on $\Sr(X)$ with the same result). This concludes the proof of strict deformation quantization. However, injectivity of the continuous extension $\QB:\CR(X)\to\mR(X,\sigma)$ is more difficult.
	
	We use the Fock space vectors $\wexp(w)$ $(w\in X)$ of Section \ref{sct:Quantization}, which satisfy
$$e^{i\phi(x)}\wexp(w)=e^{-\frac{\hbar}{2}\|x\|^2-\sqrt{\hbar}\p{x}{w}}\wexp(w+\sqrt{\hbar}x),$$
and
\begin{align}\label{eq:inner product exponential vectors}(\wexp(v),\wexp(w))=e^{\p{v}{w}}.
\end{align}
A short computation shows that, for every levee $g\circ P_V\in\hat{\mathcal D}_\mR(X)$,
\begin{align}\label{eq:matrix element}
(\wexp(-w),\QB(g\circ P_V)\wexp(w))=e^{-\|w\|^2}\int_V\dslash x \hat{g}(x)\exp\left(-\frac{3\hbar}{4}\|x\|^2-2\sqrt{\hbar}(w\cdot x)\right),
\end{align}
where we are using that the pre-Hilbert space $X$ and the real inner product space $X$ are related by $x\cdot y :=\Re(\p{x}{y})$.

We use the Fourier theorem $\int\dslash x\hat{g}(x)F(x)=\int\dslash y g(y)\hat{F}(y)$. The Fourier transform of $F\in\S(V)$ where $F(x)=\exp\left(-\frac{3\hbar}{4}\|x\|^2-2\sqrt{\hbar}(w\cdot x)\right)$ is obtained by a computation that is standard apart from the appearance of $P_V$:
\begin{align*}
\hat{F}(y)&=e^{\frac{4}{3}\|P_V w\|^2}\int_V \dslash x e^{-i(x\cdot y)}\exp(-\tfrac{3\hbar}{4}\|x\|^2-2\sqrt{\hbar}(x\cdot w)-\tfrac{4}{3}\|P_V w\|^2)\\
&=e^{\frac{4}{3}\|P_V w\|^2}\int_V \dslash x e^{-i(x\cdot y)}\exp(-\tfrac{3\hbar}{4}\|x+\tfrac{4}{3\sqrt{\hbar}}P_V w\|^2)\\
&=e^{\frac{4}{3}\|P_V w\|^2}e^{i(\frac{4}{3\sqrt{\hbar}}P_Vw\cdot y)}\int_V \dslash x e^{-i(x\cdot y)}\exp(-\tfrac{3\hbar}{4}\|x\|^2)\\
&=e^{\frac{4}{3}\|P_V w\|^2}e^{i(\frac{4}{3\sqrt{\hbar}}P_Vw\cdot y)}\left(\tfrac{2}{3\hbar}\right)^{\frac{\dim V}{2}}e^{-\frac{1}{3\hbar}\|y\|^2}.
\end{align*}
We obtain the formula
\begin{align}\label{eq:formula V}
&(\wexp(-w),\QB(g\circ P_V)\wexp(w))\nonumber\\
&=e^{-\|w\|^2}\int_V\frac{\dd y}{\sqrt{3\pi\hbar}^{\dim V}} g(y)\exp\left(-\tfrac{1}{3\hbar}\|y\|^2+\tfrac{4}{3\sqrt{\hbar}}i(w\cdot y)+\tfrac{4}{3}\|P_Vw\|^2\right).
\end{align}
For any finite dimensional subspace $W\subseteq X$ containing $V$ we can write $W=V\oplus U$ for $V\perp U$. Inserting the identity
\begin{align}\label{eq:identity}
1=\int_U \frac{\dd y}{\sqrt{3\pi\hbar}^{\dim U}}\exp(-\tfrac{1}{3\hbar}\|y\|^2+\tfrac{4}{3\sqrt{\hbar}}i(w\cdot y)+\tfrac{4}{3}\|P_U w\|^2),
\end{align}
we may generalize \eqref{eq:formula V} to any finite dimensional subspace $W$ containing $V$, namely,
\begin{align}\label{eq:matrix element for every W}
&(\wexp(-w),\QB(g\circ P_V)\wexp(w))\nonumber\\
&=e^{-\|w\|^2}\int_{W}\frac{\dd y}{\sqrt{3\pi\hbar}^{\dim W}} (g\circ P_V)(y)\exp(-\tfrac{1}{3\hbar}\|y\|^2+\tfrac{4}{3\sqrt{\hbar}}i(w\cdot y)+\tfrac{4}{3}\|P_{W}w\|^2).
\end{align}

With the purpose of obtaining a contradiction, we fix $f\in\CR(X)\setminus\{0\}$ such that $\QB(f)=0$. Let $x_0\in X$ be such that $|f(x_0)|>0$. We shall show that there exists a cylinder set on which the values of $f$ stay close to $f(x_0)$ (and are in particular bounded away from zero).
Fix an $f_0\in\SR(X)$ such that $\|f-f_0\|_\infty<\frac{1}{4}|f(x_0)|$, and fix a finite-dimensional subspace $V_0\subseteq X$ such that  $f_0\circ P_{V_0}=f_0$ and $x_0\in V_0$. As $f_0|_{V_0}$ is continuous, there exists a neighborhood $K\subseteq V_0$ of $x_0$ on which $|f_0(v)-f_0(x_0)|<\frac{1}{4}|f(x_0)|$ for all $v\in K$. Hence, $|f_0(x)-f_0(x_0)|=|f_0(P_{V_0}(x))-f_0(x_0)|<\frac{1}{4}|f(x_0)|$ for all $x\in P_{V_0}^{-1}(K)$. It follows that
\begin{align*}
|f(x)-f(x_0)|&\leq |f(x)-f_0(x)|+|f_0(x)-f_0(x_0)|+|f_0(x_0)-f(x_0)|\\
&<\frac{3}{4}|f(x_0)|,
\end{align*}
for all $x$ in the cylinder set $P_{V_0}^{-1}(K)$.

We now fix a sequence $(f_n)_{n\in\N}\subseteq \hat{\mathcal D}_\mR(X)$ converging uniformly to $f$, and pick finite-dimensional subspaces $W_n$ such that $V_0\subseteq W_n$ and $f_n\circ P_{W_n}=f_n$. From \eqref{eq:matrix element for every W} and linearity we obtain
\begin{align*}
&(\wexp(-w),\QB(f_n)\wexp(w))\\
&=e^{-\|w\|^2}\int_{W_n}\frac{\dd y}{\sqrt{3\pi\hbar}^{\dim W_n}} f_n(y)\exp(-\tfrac{1}{3\hbar}\|y\|^2+\tfrac{4}{3\sqrt{\hbar}}i(w\cdot y)+\tfrac{4}{3}\|P_{W_n}w\|^2).
\end{align*}
It follows from \eqref{eq:inner product exponential vectors} that $w\mapsto\wexp(w)$ is continuous, so the above expression is continuous in $w$, as well as bounded uniformly in $w$. We now integrate both sides over $w\in V_0$ against the Schwartz function $w\mapsto \hat{h}(w)e^{-\frac{1}{3}\|w\|^2}$, where $h\in\S(V_0)$ is arbitrary. By Tonelli--Fubini, we obtain
\begin{align*}
&\int_{V_0}\dslash w (\wexp(-w),\QB(f_n)\wexp(w))\hat{h}(w)e^{-\frac{1}{3}\|w\|^2}\\
&=\int_{W_n}\frac{\dd y}{\sqrt{3\pi\hbar}^{\dim W_n}} f_n(y)e^{-\frac{1}{3\hbar}\|y\|^2}\int_{V_0}\dslash w \,e^{\frac{4}{3\sqrt{\hbar}}i(w\cdot y)}\hat{h}(w)\\
&=\int_{W_n}\frac{\dd y}{\sqrt{3\pi\hbar}^{\dim W_n}} f_n(y)e^{-\frac{1}{3\hbar}\|y\|^2}h(\tfrac{4}{3\sqrt{\hbar}}P_{V_0}(y)),
\end{align*}
for all $h\in\S(V)$. The absolute value of the left-hand side can be bounded uniformly in $n$.
Since $\QB$ is supremum norm to operator norm continuous, and $\QB(f)=0$, it follows that the left-hand side converges to 0. Therefore, the right-hand side converges to 0.
Furthermore, by using a special case of \eqref{eq:identity}, namely,
\begin{align}\label{eq:Gaussian integration identity}
\int_{W_n}\frac{\dd y}{\sqrt{3\pi\hbar}^{\dim W_n}} e^{-\frac{1}{3\hbar}\|y\|^2}=1,
\end{align}
the triangle inequality for integrals implies that
\begin{align*}
&\Bigg|\int_{W_n}\frac{\dd y}{\sqrt{3\pi\hbar}^{\dim W_n}} f_n(y)e^{-\frac{1}{3\hbar}\|y\|^2}h(\tfrac{4}{3\sqrt{\hbar}}P_{V_0}(y))\\
&\qquad-\int_{W_n}\frac{\dd y}{\sqrt{3\pi\hbar}^{\dim W_n}} f(y)e^{-\frac{1}{3\hbar}\|y\|^2}h(\tfrac{4}{3\sqrt{\hbar}}P_{V_0}(y))\Bigg|\\
&\leq\|f_n-f\|_\infty\cdot\|h\|_\infty\to 0.
\end{align*}
It follows that
\begin{align}\label{eq:limit is 0}
\lim_{n\to\infty}\int_{W_n}\frac{\dd y}{\sqrt{3\pi\hbar}^{\dim W_n}} f(y)e^{-\frac{1}{3\hbar}\|y\|^2}h(\tfrac{4}{3\sqrt{\hbar}}P_{V_0}(y))=0.
\end{align}
By construction of $V_0$ there is a cylinder set $P_{V_0}^{-1}(K)$ on which $|f(x)-f(x_0)|<\tfrac34|f(x_0)|.$ Pick a positive Schwartz function $h\in\S(V_0)$ satisfying $\operatorname{supp} h\subseteq \frac{4}{3\sqrt{\hbar}}K$ and
\begin{align*}
\int_{W}\frac{\dd y}{\sqrt{3\pi\hbar}^{\dim W}}e^{-\frac{1}{3\hbar}\|y\|^2}h(\tfrac{4}{3\sqrt{\hbar}}P_{V_0}(y))=1,
\end{align*}
for $W=V_0$ and hence (by \eqref{eq:identity}) for $W=W_n$ for all $n\in\N$.
We obtain that
\begin{align*}
&\left|\left(\int_{W_n}\frac{\dd y}{\sqrt{3\pi\hbar}^{\dim W_n}}f(y)e^{-\frac{1}{3\hbar}\|y\|^2}h(\tfrac{4}{3\sqrt{\hbar}}P_{V_0}(y))\right)-f(x_0)\right|\\
&=\left|\int_{W_n}\frac{\dd y}{\sqrt{3\pi\hbar}^{\dim W_n}}\left(f(y)-f(x_0)\right)e^{-\frac{1}{3\hbar}\|y\|^2}h(\tfrac{4}{3\sqrt{\hbar}}P_{V_0}(y))\right|\\
&\leq\int_{P_{V_0}^{-1}(K)}\frac{\dd y}{\sqrt{3\pi\hbar}^{\dim W_n}}\left|f(y)-f(x_0)\right|e^{-\frac{1}{3\hbar}\|y\|^2}h(\tfrac{4}{3\sqrt{\hbar}}P_{V_0}(y))\\
&\leq\frac{3}{4}|f(x_0)|,
\end{align*}
for every $n\in\N$. So the integral within brackets on the left-hand side is uniformly bounded away from zero by $\frac{1}{4}|f(x_0)|$ and hence cannot converge to 0. Comparing with \eqref{eq:limit is 0} yields a contradiction. Hence $f=0$ for every $f\in\CR(X)$ with $\QB(f)=0$.
\paragraph{Proof that $\QB:\CR(X)\to\mR(X,\sigma)$ is not surjective.}
We let $g\circ P_V\in \hat{\mathcal D}_\mR(X)$ be a levee, and write $g_\lambda(x):=g(\lambda x)$ for $x\in X$, $\lambda>0$. For the Fourier transform we have $\widehat{g_\lambda}(x)=\lambda^{-\dim V}\hat{g}(\frac{x}{\lambda})$, and hence
\begin{align*}
Q^B_\hbar(g_\lambda\circ P_V)&=\int_V\dslash{x} e^{-\frac{\hbar}{4}\|x\|^2}\widehat{g_\lambda}(x)e^{i\phi(x)}\\
&=\lambda^{-\dim V}\int_V\dslash x e^{-\frac{\hbar}{4}\|x\|^2}\hat{g}(\frac{x}{\lambda})e^{i\phi(x)}.
\end{align*}
As $\lambda\to\infty$, the norm
$\|g_\lambda\circ P_V\|_\infty=\|g\|_\infty$ stays constant, whereas the operator norm of $Q_\hbar^B(g_\lambda\circ P_V)$ is bounded by
\begin{align*}
\|Q^B_\hbar(g_\lambda\circ P_V)\|\leq\lambda^{-\dim V}\int_V\dslash{x} e^{-\frac{\hbar}{4}\|x\|^2}\|\hat{g}\|_\infty,
\end{align*}
which goes to 0 as $\lambda\to\infty$. Hence $(Q_\hbar^B)^{-1}$, if it would exist, would not be a bounded operator.
If $Q^B_\hbar:\CR(X)\to\mR(X,\sigma)$ would be bijective, then it would be a bijective bounded linear map between Banach spaces, and hence have a bounded inverse.  We conclude that $Q^B_\hbar:\CR(X)\to\mR(X,\sigma)$ is not surjective.

The same argument applies to $\QB:C_0(\R^{2n})\to K(L^2(\R^n))$, given by taking $X=\R^{2n}$ with standard symplectic and real inner product structure. Since this map is the restriction of $\QB:\CR(\R^{2n})\to\mR(\R^{2n},\sigma)$ to $C_0(\R^{2n})\subseteq\CR(\R^{2n})$, it is also injective, and the same argument as before (taking $g\circ P_V=g\in \hat{\mathcal{D}}(\R^{2n})$ for $V=\R^{2n}$) shows that it is not a Banach isomorphism, so there is a compact operator $C\in K(L^2(\R^n))\setminus\QB(C_0(\R^{2n}))$.
\end{proof}

\section{Function Spaces}\label{sct: Function Spaces}

This section aims to give a concrete description of $\Cr(\R^m)$. The results of Sections \ref{sct: Function Spaces} and \ref{sct: Gelfand Spectrum} are simplified by the finite dimensionality of $\R^m$, while staying applicable in an infinite dimensional setting, since $\Cr(X)$ is the direct limit of $\Cr(\R^m)$, $m\rightarrow\infty$. Another advantage of the finite dimensional case is that it can be visualised in 3D, see Figure \ref{fig:1}.

\begin{figure}[h!]
\centering
\resizebox{0.5\textwidth}{!}{%
  \includegraphics{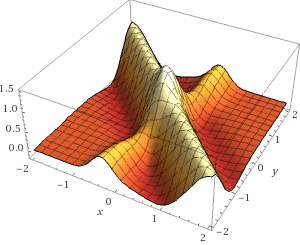}
}
\caption[The sum of two levees.]{The sum of two levees.\footnote{Plotted using Wolfram Alpha LLC.: Wolfram$|$Alpha. \url{http://www.wolframalpha.com/input/?i=plot+0.5e\%5E(-10(x-y\%2F6)\%5E2)\%2Bcos(2x\%2B2y)e\%5E(-(x\%2By)\%5E2),+x\%3D-2+to+2+and+y\%3D-2+to+2} (access July 6, 2018).}}
\label{fig:1}
\end{figure}

\footnotetext[1]{Plotted using Wolfram Alpha LLC: Wolfram$|$Alpha, \url{http://www.wolframalpha.com/input/?i=plot+0.5e\%5E(-10(x-y\%2F6)\%5E2)\%2Bcos(2x\%2B2y)e\%5E(-(x\%2By)\%5E2),+x\%3D-2+to+2+and+y\%3D-2+to+2} (access July 6, 2018).}

For $m=2$ and $\nul P(=\dim \ker P)=1$, the surface plot of the absolute value of $g\circ P$ resembles a physical levee with top height of $\supnorm{g}$ stretching out indefinitely in the direction of $\ker P$ and - in the perpendicular direction - descending into the flat surrounding landscape. The function $g$ determines the shape of the levee and $P$ determines the direction into which it extends. For general values of $\nul P$ and $m$, it is helpful to imagine an affine space of dimension $\nul P$, around which the support of $g\circ P$ is concentrated.
	 
	Proposition \ref{prop: Finite Sr is dense in Cr} displays a dense subset of $\Cr(\R^m)$, consisting solely of finite sums of levees. In fact, one can show that the elements of $\Cr(\R^m)$ are precisely the uniformly convergent series
\begin{align}\label{sum of levees}
	f=\sum_{i=1}^\infty g_i\circ P_i\,.
\end{align}
	The convergence of this sum is conditional, and this makes the representation \eqref{sum of levees} less useful regarding products and sums in $\Cr(X)$. In fact, as this chapter will make clear, if the terms in \eqref{sum of levees} are rearranged, the sum often diverges pointwise. To obtain a useful representation of $f\in\Cr$, avoiding conditionally convergent sums, we will define function spaces $C_r(\R^m)$, consisting of countable sums of levees $g_i\circ P_i$ for which $\nul P_i= r$, modulo levees $g\circ P$ with $\nul P < r$.
	
\begin{defn}\label{defn: C_r}
	For $0\!\leq\! r\!\leq m$, define the spaces $C_r(\R^m)$ as follows. First, $C_0(\R^m)$ is the usual space of continuous functions vanishing at infinity (showing the consistency of our notation). Assuming $C_{r-1}(\R^m)$ is a vector space, we denote the equivalence class of $f\in C_b(\R^m)$ in $C_b(\R^m)/C_{r-1}(\R^m)$ by $[f]_{r-1}$, and use the topology induced by
	    $$\nrm{[f]_{r-1}}{r-1}:=\inf_{\xi\in C_{r-1}}\supnorm{f-\xi}\,.$$
	We define
	$$C_r(\R^m) := \left\{  f\in C_b(\R^m) \!\mid\!\!\! 
	\begin{array}{l}
		[f]_{r-1} = \sum_{i}  [g_i\circ P_i]_{r-1} \text{ for } P_i\text{ distinct (m-r)-} \\
		\text{dimensional projections, and }g_i\in C_0(\ran P_i)
	\end{array}
	\!\right\}\!, $$
	where the sum is over an arbitrary countable set (and hence unconditional).

%
	
\end{defn}

We often write $\nrm{f}{r-1}:=\nrm{[f]_{r-1}}{r-1}$ for convenience. The function spaces $C_r$ build up the commutative resolvent algebra, as made precise by the following theorem. We will postpone its proof for a short while.

\begin{thm} \label{thm: Com Res Alg}
	We have
	$$\Cr(\R^m)=C_{m}(\R^m).$$
	Moreover, $C_0\subset C_1 \subset \ldots \subset C_{m}$ is a chain of closed ideals in $\Cr$.
\end{thm}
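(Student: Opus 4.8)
The plan is to prove all three assertions---that each $C_r$ is a closed ideal in $\Cr$, that $C_{r-1}\subseteq C_r$, and that $\Cr(\R^m)=C_m(\R^m)$---by a single induction on $r$, carrying along the auxiliary inclusion $C_r\subseteq\Cr$ so that Lemma~\ref{lem: f^V+w constant ae} becomes available on $C_r$. The base case is classical: $C_0=C_0(\R^m)$ is a closed ideal in $C_b(\R^m)$, and since $\S(\R^m)\subseteq\Sr(\R^m)\subseteq\Cr$ is dense in $C_0(\R^m)$ while $\Cr$ is closed, $C_0\subseteq\Cr$. For the inductive step I would first dispatch the soft structural facts: that $C_r$ is a vector space and that $C_{r-1}\subseteq C_r$ both follow by merging two unconditionally summable families of levee-classes and combining the terms that share a projection (using that each $C_0(\ran P)$ is a vector space), the inclusion being the case of the empty sum.

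Next I would establish $C_r\subseteq\Cr$. If $f\in C_r$, then $[f]_{r-1}=\sum_i[g_i\circ P_i]_{r-1}$ converges unconditionally, so along an increasing sequence of finite index sets $F_n$ we have $\nrm{f-\sum_{i\in F_n}g_i\circ P_i}{r-1}\to0$; choosing $\xi_n\in C_{r-1}$ with $\supnorm{f-\sum_{i\in F_n}g_i\circ P_i-\xi_n}\to0$ exhibits $f$ as a sup-norm limit of elements $\sum_{i\in F_n}g_i\circ P_i+\xi_n\in\Cr$ (every levee lies in $\Cr$, and $C_{r-1}\subseteq\Cr$ by induction). Closedness of $\Cr$ then gives $f\in\Cr$.

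The heart of the matter is the closedness of $C_r$, and here I would prove the norm formula $\nrm{\sum_i g_i\circ P_i}{r-1}=\sup_i\supnorm{g_i}$ for distinct $(m-r)$-dimensional projections $P_i$. The lower bound comes from Lemma~\ref{lem: f^V+w constant ae}: for an $r$-dimensional $K=\ker P_j$ and any $w$, the a.e.-constant value $f^{K,w}$ equals $g_j(P_jw)$, since every distinct levee has $K\nsubseteq\ker P_i$ and hence contributes $0$, while every element of $C_{r-1}$ contributes $0$ in the direction $K$ (a fact I would fold into the induction, using that levees of nullity $<r$ have $K\nsubseteq\ker$ and that $f\mapsto f^{K,w}$ is $\supnorm{\cdot}$-bounded); as $|f^{K,w}|\le\nrm{f}{r-1}$, taking the supremum over $w$ yields $\supnorm{g_j}\le\nrm{f}{r-1}$. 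The upper bound is the main obstacle: the levees overlap only near the pairwise intersections $(\ker P_i+w_i)\cap(\ker P_j+w_j)$, which are affine subspaces of dimension $<r$, so the excess of $\sum_i g_i\circ P_i$ over $\sup_i\supnorm{g_i}$ is concentrated near a union of such subspaces and can be subtracted off by a suitable element of $C_{r-1}$; carrying this out simultaneously for infinitely many terms is where the tail bound $\supnorm{g_i}\to0$ forced by unconditional convergence is essential. With the formula in hand, $C_r/C_{r-1}$ is isometric to the $c_0$-direct sum of the spaces $C_0(\ran P)$ over all $(m-r)$-dimensional $P$, hence complete; since the quotient map $C_b\to C_b/C_{r-1}$ is continuous ($C_{r-1}$ being closed by induction), $C_r$ is the preimage of a closed set and therefore closed.

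That $C_r$ is an ideal then follows from Lemma~\ref{lem:product of levees}: the product of any levee with $g_j\circ P_j$ ($\nul P_j=r$) is again a levee of nullity at most $r$, equal to $r$ only when its kernel is $\ker P_j$, so multiplication by a levee sends the generating data of $C_r$ into $C_r$ (the lower-nullity products landing in $C_{r-1}$); this extends to $\Sr$ by linearity and to all of $\Cr$ by density of $\Sr$ together with the closedness just proved. Finally, to identify $\Cr=C_m$: the inclusion $C_m\subseteq\Cr$ is the $r=m$ case above, while conversely every levee $g\circ P$ lies in $C_{\nul P}\subseteq C_m$, so $\Sr\subseteq C_m$, and since $C_m$ is closed and $\Sr$ is dense in $\Cr$ by Proposition~\ref{prop: Finite Sr is dense in Cr}, we obtain $\Cr=\overline{\Sr}\subseteq C_m$. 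The chain $C_0\subset\cdots\subset C_m$ of closed ideals is precisely what the induction has produced.
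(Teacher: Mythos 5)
Your overall architecture coincides with the paper's: an induction on $r$ whose engine is the norm formula $\nrm{\sum_i[g_i\circ P_i]_{r-1}}{r-1}=\sup_i\supnorm{g_i}$ (the paper's Lemma \ref{prop: norm sum sup norm}, stated there with a shifted index), with the lower bound obtained from Lemma \ref{lem: f^V+w constant ae}, the inclusion $C_r\subseteq\Cr$ obtained from partial sums corrected by elements of $C_{r-1}$, and the identification $\Cr=C_m$ obtained by noting that every generator (or levee) lies in $C_m$ while $C_m$ is a closed subalgebra. Two points of execution differ, both to your credit. Your deduction of closedness from the norm formula --- identifying $C_r/C_{r-1}$ isometrically with a $c_0$-direct sum of the spaces $C_0(\ran P)$, hence complete, hence closed in $C_b/C_{r-1}$, and pulling back along the continuous quotient map --- is a clean repackaging of what the paper does by hand, namely reshuffling an approximating sequence onto a common family of projections and passing to the limit termwise using the same formula. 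And your treatment of the ideal property via Lemma \ref{lem:product of levees} is more explicit than the paper's, which essentially leaves it implicit in the claim that each $C_r$ is a C*-subalgebra.

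The one place where your proposal remains a heuristic is the upper bound in the norm formula, which is also its hardest step. You correctly locate the difficulty --- the excess of $\sum_i g_i\circ P_i$ over $\sup_i\supnorm{g_i}$ is concentrated near the pairwise intersections of the kernels, which are affine subspaces of dimension $<r$ --- but you propose to correct for all infinitely many terms at once, near a possibly dense union of such subspaces, and you exhibit no candidate for the correcting element of $C_{r-1}$. The paper sidesteps the infinite case entirely: continuity of the quotient norm reduces the formula to finite index sets $I$, and an induction on $\# I$ then peels off one levee at a time, the explicit correction being
$$\phi=\Big(\sum_{i\neq j}g_i\circ P_i-\xi\Big)\frac{|g_j\circ P_j|}{\supnorm{g_j}}\,,$$
which lies in $C_{r-1}$ by Lemma \ref{lem:product of levees} because $P_i\neq P_j$ forces each product of levees to have strictly smaller nullity (together with closedness of $C_{r-1}$ for the $\xi$-term). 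With that reduction your remark that the tail bound $\supnorm{g_i}\to0$ is ``essential'' for the simultaneous correction becomes moot, and the rest of your argument goes through as written. You should supply either this explicit one-term-at-a-time correction or some equally concrete substitute; as it stands, the upper bound is asserted rather than proved.
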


%
%

If we want to check whether a given function $f$ is in $C_r(\R^m)$ (and hence in the commutative resolvent algebra), Definition \ref{defn: C_r} demands the existence of a certain unconditionally convergent sum $\sum [g_i\circ P_i]_{r-1}$. It would be convenient if the assumption of unconditional convergence is not necessary, and this is indeed what the following lemma proves.
\begin{lem} \label{prop: norm sum sup norm}
	Let $I\subseteq\N$ be any subset, and let $\sum_{i\in I}[g_i\circ P_i]_r$ be a (possibly conditionally) convergent sum of levees with different $P_i$ of nullity $\nul P_i=r+1$. Then
	\begin{align} \label{eqn: norm sum sup norm}
		\nrm{\sum_{i\in I} [g_i\circ P_i]_r}{r}=\sup_{i\in I} \supnorm{g_i}\,.
	\end{align}
	Hence any such series is unconditionally convergent.
\end{lem}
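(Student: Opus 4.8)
The plan is to establish the norm identity for finite sums first; the general (possibly conditionally convergent) case and the unconditional convergence then follow formally. Once we know $\nrm{\sum_{i\in F}[g_i\circ P_i]_r}{r}=\max_{i\in F}\supnorm{g_i}$ for every finite $F\subseteq I$, the one-term instance reads $\nrm{[g_i\circ P_i]_r}{r}=\supnorm{g_i}$, so convergence of the series forces $\supnorm{g_i}\to0$; applying the finite formula to finite tails then yields the Cauchy criterion for unconditional convergence, and taking the limit of partial sums in the finite identity gives \eqref{eqn: norm sum sup norm}. Thus I only need to treat $f=\sum_{i=1}^n g_i\circ P_i$, writing $M:=\max_i\supnorm{g_i}$ and noting that distinct orthogonal projections of equal nullity have distinct, hence mutually non-nested, kernels.

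For the lower bound $\nrm{[f]_r}{r}\ge M$ I would use the large-scale functionals from Lemma \ref{lem: f^V+w constant ae}. For an $(r+1)$-dimensional subspace $V$ and $w\in X$, let $\Phi_{V,w}(f)$ denote the $\mu$-almost-everywhere constant value of $f^{V,w}$; it is linear in $f$ with $|\Phi_{V,w}(f)|\le\supnorm{f}$. Since, by Definition \ref{defn: C_r}, every element of $C_r$ is a sup-norm limit of linear combinations of levees of nullity $\le r$, and since Lemma \ref{lem: f^V+w constant ae} makes $\Phi_{V,w}$ vanish on any levee whose projection has nullity $<\dim V$, the functional $\Phi_{V,w}$ annihilates $C_r$ and hence descends to a contraction on $(C_b(\R^m)/C_r,\nrm{\cdot}{r})$. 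Evaluating at $V=\ker P_j$ and $w\in\ran P_j$, the term $i=j$ contributes $g_j(w)$ and all terms $i\ne j$ contribute $0$ (because $\ker P_j\nsubseteq\ker P_i$), so $|g_j(w)|\le\nrm{[f]_r}{r}$; taking the supremum over $w$ and $j$ gives $M\le\nrm{[f]_r}{r}$.

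For the upper bound $\nrm{[f]_r}{r}\le M$ I would subtract an explicit correction $\xi\in C_r$ supported on the region where two or more levees are simultaneously large. Fix $\epsilon>0$ and $R$ with $|g_i(z)|<\epsilon$ for $\|z\|>R$ and all $i$. Setting $L_{ij}:=\ker P_i\cap\ker P_j$ (of dimension $\le r$), the injectivity of $z\mapsto(P_iz,P_jz)$ on $L_{ij}^\perp$ shows that the active slabs $S_i:=\{\,y:\|P_iy\|\le R\,\}$ satisfy $S_i\cap S_j\subseteq\{\,y:\|P_{L_{ij}^\perp}y\|\le\rho_0\,\}$ for a suitable $\rho_0$. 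Choosing a compactly supported $\beta$ with $\beta\equiv1$ on $[0,\rho_0]$, the cutoffs $\chi_{ij}(y):=\beta(\|P_{L_{ij}^\perp}y\|)$ are levees of nullity $\le r$, so $\chi:=1-\prod_{i<j}(1-\chi_{ij})$ expands into a finite sum of products of such levees. The decisive point is that, by Lemma \ref{lem:product of levees}, each product $(g_k\circ P_k)\chi_{i_1j_1}\cdots$ is again a levee whose kernel is contained in some $L_{ij}$, hence of nullity $\le r$; consequently $\xi:=f\chi$ is a finite sum of nullity-$\le r$ levees and lies in $C_r$. Since $\chi\equiv1$ wherever two slabs meet, $f-\xi=f(1-\chi)$ is supported where at most one $\|P_iy\|\le R$, on which $|f|\le M+(n-1)\epsilon$; hence $\supnorm{f-\xi}\le M+(n-1)\epsilon$, and letting $\epsilon\to0$ gives $\nrm{[f]_r}{r}\le M$.

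The main obstacle is exactly this last membership $\xi\in C_r$: a naive geometric cutoff of $f$ near the low-dimensional subspaces $L_{ij}$ would produce a function supported in the right place but with no guarantee of belonging to $C_r$. Taking the cutoffs $\chi_{ij}$ to be levees and invoking Lemma \ref{lem:product of levees} is what forces the products to be genuine levees of lowered nullity, and so keeps the correction inside $C_r$. The remaining ingredients — the bounded-below estimate for $z\mapsto(P_iz,P_jz)$ and the limit interchanges in the reduction to finite sums — are routine.
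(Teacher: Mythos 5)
Your proof is correct, and while your lower bound is essentially the paper's (both evaluate the large-scale functionals of Lemma \ref{lem: f^V+w constant ae} at $V=\ker P_j$, $w\in\ran P_j$, using that these annihilate every levee of nullity $\le r$ and hence all of $C_r$), your upper bound takes a genuinely different route. The paper argues by induction on the number of terms: it peels off the dominant term $g_j\circ P_j$, takes a near-optimal $\xi\in C_r$ for the remaining sum, and corrects with the multiplicative term $\phi=\bigl(\sum_{i\neq j}g_i\circ P_i-\xi\bigr)\,|g_j\circ P_j|/\supnorm{g_j}$, whose membership in $C_r$ requires Lemma \ref{lem:product of levees} \emph{and} closedness of $C_r$ (since $\xi$ is an arbitrary element of $C_r$ being multiplied by a levee). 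You instead build, in one shot, an explicit cutoff $\chi$ from levees $\chi_{ij}$ of nullity $\dim(\ker P_i\cap\ker P_j)\le r$, exploiting the geometric fact that the regions where two distinct nullity-$(r+1)$ levees are simultaneously non-negligible lie in a bounded neighbourhood of $\ker P_i\cap\ker P_j$; the correction $\xi=f\chi$ is then a \emph{finite} sum of levees of nullity $\le r$, so its membership in $C_r$ needs only the definition and Lemma \ref{lem:product of levees}, not the closedness of $C_r$ (which the paper only establishes later, in Theorem \ref{thm: Com Res Alg}, using this very lemma in an interleaved induction). What your approach buys is a lighter logical dependency and a transparent picture of why the interference between levees is a lower-nullity phenomenon; what the paper's induction buys is brevity. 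Two cosmetic points: the bound off all slabs is $n\epsilon$ rather than $M+(n-1)\epsilon$ (harmless, assume $\epsilon\le M$ or adjust the constant), and your phrase ``descends to a contraction on $C_b/C_r$'' should be read as the estimate $|\Phi_{V,w}(f)|\le\supnorm{f-\xi}$ for all $\xi\in C_r$, since $\Phi_{V,w}$ is only defined on uniform limits of finite sums of levees, not on all of $C_b$ --- the same level of precision as the paper itself.
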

\begin{proof}
	By continuity of $\nrm{\cdot}{r}$ on $C_b/C_r$, we only need to show \eqref{eqn: norm sum sup norm} for every finite $I\subset \N$. We will use induction on  $\# I$. Let $j\in I$ be such that $\sup_{i\in I}\supnorm{g_i}=\supnorm{g_j}$. Then by the induction hypothesis, $$\hspace{-7pt}\nrm{\sum_{j\neq i\in I} g_i\circ P_i}{r}\hspace{0.5pt}\leq \supnorm{g_j}\,.$$ Fix $\epsilon >0$ and take $\xi\in C_r$ such that 
	\begin{align}\label{eqn: defining property xi}
		\supnorm{\sum_{i\neq j}g_i\circ P_i-\xi}\!\leq\supnorm{g_j}+\epsilon\,.
	\end{align}
	So both $\sum_{i\neq j}g_i\circ P_i-\xi$ and $g_j\circ P_j$ are (almost) bounded by $\supnorm{g_j}$, but their sum may be substantially larger on some region. It turns out that this region is small enough to be corrected for by a $C_r$-function. More precisely, we can find $\phi\in C_r(\R^m)$ such that $$\supnorm{\sum_{i\in I}g_i\circ P_i-\xi-\phi}\leq\supnorm{g_j}+\epsilon\,.$$ Some analysis shows that $$\phi=\left(\sum_{i\neq j}g_i\circ P_i-\xi\right)\frac{|g_j\circ P_j|}{\supnorm{g_j}}$$ does the job. The fact that $\phi\in C_r$ follows from Lemma \ref{lem:product of levees}, using $P_i\neq P_j$ and closedness of $C_r$. We conclude that $\nrm{\sum_{i\in I} g_i\circ P_i}{r}\leq \supnorm{g_j}\,$.
		
	To attain $\supnorm{g_j}$, we choose $w\in\ran P_j$ with $|g_j(w)|=\supnorm{g_j}$, and set $V:=\ker P_j$. With the help of Lemma \ref{lem: f^V+w constant ae}, we find
	\begin{align}\label{apply lemma 2}
		\left(\sum_{i\in I} g_i\circ P_i-\xi\right)^{V,w}=g_j(w)\qquad\mu\text{-a.e.,}
	\end{align}
	because $V\subseteq \ker P_i$ iff $i=j$, and $V\nsubseteq \ker P$ for all levees $g\circ P\in C_r(\R^m)$. From \eqref{apply lemma 2} we obtain the equality $\nrm{\sum g_i\circ P_i}{r}=\supnorm{g_j}$.
	Thus we have finished our inductive step, and the lemma follows. 
	\end{proof}

We can now prove the theorem that relates the spaces $C_r(\R^m)$ to the commutative resolvent algebra.

\begin{proof}[Proof of Theorem \ref{thm: Com Res Alg}]
	Using induction on $r\leq m$, we will prove the following  claim:
	\begin{align} \label{eqn: IH}
		C_r(\R^m)\text{ is a C*-subalgebra of }\Cr(\R^m).
	\end{align}
	If $r=0$ this follows by applying the Stone--Weierstrass theorem, or by recalling that $\Sr\subseteq \Cr$. Suppose now that \eqref{eqn: IH} is true for a fixed $r<m$. Then $C_b/C_r$ is a C*-algebra, and it swiftly follows that $C_{r+1}$ is a *-algebra. The main problem is showing that $C_{r+1}$ is a closed subset of $C_b$.
	
	Let $(f^j)_{j\in\N}\subset C_{r+1}$ converge uniformly to $f$. Write $[f^j]_r=\sum_i[g^j_i\circ P^j_i]_r$ with $g^j_i$ and $P^j_i$ as in Definition \ref{defn: C_r}. Simply taking the limit of each term $g^j_i\circ P^j_i$ is obstructed by the $j$-dependence of $P^j_i$, but this obstruction can be circumvented. We can reshuffle the terms and add zeroes to obtain $\tilde{g}_i^j$ and $P_i$ such that (for all $j\in\N$)
	$$\sum_{i\in\N}[g^j_i\circ P^j_i]=\sum_{i\in \N}[\tilde{g}^j_i\circ P_i]\,.$$
	Lemma \ref{prop: norm sum sup norm} shows us that $(f^j)$ is Cauchy if and only if $(\tilde{g}^j_i)$ is uniformly Cauchy:
	    $$\sup_{i\in \N}\supnorm{\tilde{g}^j_i- \tilde{g}^k_i} = \nrm{\sum_{i\in \N}[(\tilde{g}^j_i- \tilde{g}^k_i)\circ P_i]}{r}=\nrm{f^j-f^k}{r}\rightarrow 0\,.$$
	Thus we may define $g_i:=\lim \tilde{g}^j_i \in C_0(\ran P_i)$. It follows that $\tilde{g}^j_i\rightarrow g_i$ uniformly in $i$.

	Again using Lemma \ref{prop: norm sum sup norm}, convergence of the series $\sum [\tilde{g}^j_i\circ P_i]$ implies $\big\|\tilde{g}^j_i\big\|_{\infty}\rightarrow0$ (for all $j$). Therefore $\supnorm{g_i}\rightarrow0$, which in turn implies convergence of $\sum[g_i\circ P_i]$. Now closedness of $C_{r+1}$ follows from the following calculation. Using Lemma \ref{prop: norm sum sup norm} once more, we have
	\begin{align*}
		\nrm{[f]-\sum_i[g_i\circ P_i]}{r}
		&= \lim_j\nrm{\sum_i[(\tilde{g}^j_i-g_i)\circ P_i]}{r}\\
		&= \lim_j\,\,\sup_i\supnorm{\tilde{g}^j_i-g_i}=0\,.
	\end{align*}
	
	Let $f\in C_{r+1}$ be arbitrary, written as $$[f]=\sum[g_i\circ P_i]\in C_{r+1}/C_r\,,$$ with the usual conventions. Then all $g_i\circ P_i\in \Cr$, and thereby also the partial sums $f^k:=\sum_{i=1}^{k}g_i\circ P_i\in \Cr$. Since $\nrm{f^k-f}{r}\rightarrow0$, we can find $\xi_k\in C_r\subseteq \Cr$ such that $\supnorm{f^k-\xi_k-f}\rightarrow0$. Hence, $f\in \Cr$.
	
	Thus we have proved that $C_{r+1}(\R^m)$ is a C*-subalgebra of $\Cr$. By induction it follows that this holds for all $r<m$, from which the second statement of Theorem \ref{thm: Com Res Alg} follows. We also find $C_{m}(\R^m)\subseteq\Cr(\R^m)$.
	
	The converse inclusion follows if $h^\lambda_x\in C_{m}(\R^m)$ for all $\lambda\neq0,~x\in\R^m$. Define $P$ as the projection onto the span of $x$. Then $\ker P$ is $m$-dimensional when $x=0$ and is $(m-1)$-dimensional otherwise. Since $g(Py):=h^\lambda_x(y)$ defines a function $g\in C_0(\ran P)$, we finally obtain $h^\lambda_x=g\circ P\in C_{m}(\R^m)$.
\end{proof}

We have now obtained a concrete description of $\Cr(\R^m)$ in terms of sums of functions $g\circ P$. Another possible description restricts to one-dimensional projections $P$, but allows to compose with another $C_0$-function. Namely, it turns out that the span of functions of the form $g\circ\sum g_j\circ p_{x_j}$ (with $g\in C_0(\R)$ and a finite sum of real-valued levees $g_j\circ p_{x_j}$) is dense in $\Cr(\R^m)$. An elaborate proof will be given elsewhere, along with envisioned applications for machine learning.


\section{Gelfand Spectrum}\label{sct: Gelfand Spectrum}
We implicitly encountered characters of the commutative resolvent algebra in Lemma \ref{lem: f^V+w constant ae}. Let us now define them precisely. For $V\subseteq\R^m$ linear, $w\in V^\perp$ and $f\in\Cr(\R^m)$, we have defined $f^{V,w}:S(V)\rightarrow\C$ in \eqref{f^Vw}. Let $\chi(V+w)(f)$ be the unique $z\in\C$ such that $f^{V,w}=z$ almost everywhere.\footnote{The character $\chi(V+w)$ can be thought of as the `mean value' on $V+w$.} A quick calculation shows that $\chi(V+w)$ is multiplicative and nonzero, hence $\chi(V+w)\in\Delta(\Cr(\R^m))$, where $\Delta(\Cr(\R^m))$ is the Gelfand spectrum of the commutative resolvent algebra, more briefly denoted by $\Delta$, carrying the weak*-topology (i.e. the Gelfand topology). In practice the characters $\chi(V+w)$ are calculated on levees, where they become
\begin{align*}
	\chi(V+w)(g\circ P)= 
	\begin{cases}
		g(Pw), & \text{if }V\subseteq\ker P,\\
		0, & \text{otherwise.}
	\end{cases}
\end{align*}


What does it mean if a net $(\chi(V_\alpha+w_\alpha))_\alpha$ weak*-converges to $\chi(V+w)$? In that case we have
	$$\chi(V_\alpha+w_\alpha)(g\circ P_{V^\perp})\rightarrow\chi(V+w)(g\circ P_{V^\perp})=g(w)\,,$$
	for any $g\in C_0(V^\perp)$. It follows that eventually (for all $\alpha$ bigger than a fixed $\alpha_0$) we have $V_\alpha\subseteq V=\ker P_{V^\perp}$. Also, by choosing a sequence of $g$'s with support closing in upon $w$, it follows that $P_{V^\perp}w_\alpha\rightarrow w$. Inspired by these results, we will prove (in Theorem \ref{thm: Gelfand Spectrum}) that $\Delta$ is homeomorphic to the following space.
\begin{defn}\label{defn: Omega} 
	We define the set $$\Omega:=\left\{V+w\mid V\subseteq\R^m\text{ linear, }w\in V^\perp\right\}\,,$$ and say that a net $(V_\alpha+w_\alpha)_\alpha$ in $\Omega$ \textbf{is absorbed in} $V+w\in\Omega$ if  $P_{V^\perp}w_\alpha\rightarrow w$ and eventually $V_\alpha\subseteq V$.
\end{defn}
As a set, $\Omega$ is the affine Grassmanian $\Graff(\R^m)$, but we will endow $\Omega$ with a different topology. By the previous discussion, if $\chi(V_\alpha+w_\alpha)\rightarrow\chi(V+w)$, then $V_\alpha+w_\alpha$ is absorbed in $V+w$. Since the converse is false, (as all nets in $\Omega$ are absorbed in $\R^m+0$,) we will define a notion of convergence that is slightly stronger than the notion of absorption.
\begin{defn}\label{defn: Omega topology} 
	A net $(V_\alpha+w_\alpha)_\alpha$ in $\Omega$ \textbf{converges to} $V+w\in\Omega$ iff it is absorbed in $V+w$ and none of its subnets is absorbed in any $\tilde{V}+\tilde{w}\subsetneq V+w$.
\end{defn}
To see that our notion of convergence induces a topology, we prove the following lemma. It also gives useful insight into the structure of the topology of $\Omega$. 

\begin{lem}\label{lem: Omega convergence and topology}
	The notion of convergence in Definition \ref{defn: Omega topology} induces a topology, in which the following statements hold. For every $V+w\in\Omega$, the set $$B_r(V+w):=\left\{V'+w'\subseteq V+w''\mid\norm{w''-w}<r\right\}$$ is open, of which the closure equals
	\begin{align}\label{Br bar}
		\overline{B_r}(V+w):=\left\{V'+w'\subseteq V+w''\mid\norm{w''-w}\leq r\right\}\,.
	\end{align}
	The set $\left\{B_r(V+w)\setminus \bigcup_{i=1}^k\overline{B_{r_i}}(V_i+w_i)\mid k\in\N,~ r,r_i>0,~V_i+w_i\subsetneq V+w\right\}$ is a neighborhood basis of $V+w$. 
\end{lem}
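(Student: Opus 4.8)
The plan is to realize the proposed family of sets as a genuine neighborhood basis of an explicit topology $\tau$ on $\Omega$, and then to verify that net-convergence in $\tau$ is exactly the convergence of Definition \ref{defn: Omega topology}; since a topology is determined by its convergent nets, this proves at once that the convergence induces a (unique) topology and that the listed sets describe it. The computational backbone is the elementary reduction, valid because $V'\subseteq V$ forces $V^\perp\subseteq (V')^\perp$ and hence $P_{V^\perp}P_{(V')^\perp}=P_{V^\perp}$: one checks that $V'+w'\in B_r(V+w)$ iff $V'\subseteq V$ and $\norm{P_{V^\perp}w'-w}<r$, and likewise for $\overline{B_r}$ with $\leq r$. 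I would establish this first, as every later estimate is phrased through it.

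Next I would declare $U\subseteq\Omega$ open precisely when it contains a set of the listed form around each of its points. This is automatically closed under unions, and closed under finite intersections because the intersection of two basic sets at a point contains the basic set obtained by taking the smaller radius and the union of the (finitely many) excluded closed balls; so $\tau$ is a topology. The substantive step is to prove that each basic set is in fact $\tau$-open, which amounts to two things. First, $B_r(V+w)$ is open: for $V'+w'\in B_r(V+w)$ with $\rho:=\norm{P_{V^\perp}w'-w}<r$, the set $B_{r-\rho}(V'+w')$ is contained in $B_r(V+w)$, again using $P_{V^\perp}P_{(V')^\perp}=P_{V^\perp}$. Second, each $\overline{B_s}(U+u)$ is closed; granting this, every basic set is a finite intersection of open sets, hence open and a neighborhood of each of its points, so the basic sets form a neighborhood basis (giving the last assertion) and in particular $B_r(V+w)$ is open.

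Proving $\overline{B_s}(U+u)$ closed is the main obstacle. For $V'+w'\notin\overline{B_s}(U+u)$ with $V'\subseteq U$ but $\norm{P_{U^\perp}w'-u}>s$, a small ball $B_\delta(V'+w')$ already misses $\overline{B_s}(U+u)$. The delicate case is $V'\not\subseteq U$: here I would set $W:=V'\cap U\subsetneq V'$ and exhibit the basic neighborhood $B_\delta(V'+w')\setminus\overline{B_\rho}(W+w')$ of $V'+w'$, which avoids $\overline{B_s}(U+u)$. The point is that any common element $V''+w''$ has $V''\subseteq W$, and since the linear map $x\mapsto(P_{(V')^\perp}x,P_{U^\perp}x)$ has kernel $W$, its restriction to $W^\perp$ is injective; by equivalence of norms on the finite-dimensional $W^\perp$ the quantity $\norm{P_{W^\perp}w''}$ is bounded by a constant depending only on $\delta,s,\norm{w'},\norm{u}$, so a single large enough $\rho$ captures all such points inside $\overline{B_\rho}(W+w')$. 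This finite-dimensional boundedness argument is the crux.

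It then remains to compute closures and to match convergence. Since $\overline{B_r}(V+w)$ is closed and contains $B_r(V+w)$, we get $\mathrm{cl}\,B_r\subseteq\overline{B_r}$; for the reverse I would take a boundary point ($\norm{P_{V^\perp}w'-w}=r$) and slide it inward along $\eta:=P_{V^\perp}w'-w$, which lies in $V^\perp\subseteq(V')^\perp$, i.e. consider $V'+(w'-t\eta)$, which lies in $B_r(V+w)$ and in any prescribed basic neighborhood for small $t>0$ (the excluded balls, centered at proper subspaces of $V'$, cannot contain a point with direction $V'$). Finally, for the coincidence of convergences: if $(V_\alpha+w_\alpha)\to V+w$ in $\tau$, being eventually in every $B_r(V+w)$ yields absorption, and if some subnet were absorbed in $\tilde V+\tilde w\subsetneq V+w$ it would eventually enter $\overline{B_s}(\tilde V+\tilde w)$, contradicting eventual membership in the basic neighborhood $B_r(V+w)\setminus\overline{B_s}(\tilde V+\tilde w)$. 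Conversely, if the net converges per Definition \ref{defn: Omega topology} but fails to be eventually in some basic neighborhood, it is frequently in one excluded ball $\overline{B_{r_i}}(V_i+w_i)$; extracting a subnet there, the values $P_{V_i^\perp}w_\gamma$ lie in a compact ball of $V_i^\perp$, so a further subnet has $P_{V_i^\perp}w_\gamma\to\tilde w$ and is thus absorbed in $V_i+\tilde w$, which one checks is a proper sub-affine-space of $V+w$ (as $V_i\subsetneq V$ and $P_{V^\perp}\tilde w=w$), contradicting convergence. This compactness extraction is the second key point.
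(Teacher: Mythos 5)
Your proposal is correct, but it is organized in the opposite direction from the paper's proof, and the comparison is instructive. The paper starts from the convergence notion: it declares $U$ open when every convergent net outside $U$ has its limit outside $U$, then verifies by net arguments that $B_r$ is open and $\overline{B_r}$ is closed (extracting a convergent subnet of a net in $\overline{B_r}$ via the same projection estimate $\norm{P_{\tilde V^\perp}w_\alpha}\leq C\max(\norm{P_{V'^\perp}w_\alpha},\norm{P_{V^\perp}w_\alpha})$ that you derive from injectivity of $x\mapsto(P_{(V')^\perp}x,P_{U^\perp}x)$ on $(V'\cap U)^\perp$), and finally establishes the neighborhood basis by contradiction, building a net indexed by finite collections of proper affine subspaces. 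You instead generate the topology from the proposed basic sets, prove each basic set is genuinely open via a pointwise separation argument (the crux being closedness of $\overline{B_s}(U+u)$, handled with the same finite-dimensional bound), and then match net convergence in both directions. The shared technical content is the constant-$C$ estimate and a compactness extraction of subnets; what differs is where the work is placed. Your route buys two things the paper leaves implicit: an explicit verification that convergence in the induced topology really coincides with the convergence of Definition \ref{defn: Omega topology} (the paper asserts this "subsequently follows", which does require the basis characterization), and an actual proof that $\overline{B_r}(V+w)$ equals the closure of $B_r(V+w)$ --- your inward-sliding argument along $\eta=P_{V^\perp}w'-w$, noting that the excluded balls are centered at proper subspaces of $V'$ and so cannot capture the slid points, supplies the inclusion $\overline{B_r}\subseteq\mathrm{cl}\,B_r$ that the paper only states. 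The paper's route is shorter because the "limit-closed complement" construction hands it a topology for free; yours is more self-contained and makes the neighborhood basis do the work from the outset.
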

\begin{proof}
	If $\tau_\Omega$ is the set of subsets $U\subseteq\Omega$ such that every converging net outside of $U$ has a limit outside of $U$, then $\tau_\Omega$ is easily seen to be a topology. 
We will now show the last three claims of the lemma hold with respect to $\tau_\Omega$. It subsequently follows that convergence with respect to $\tau_\Omega$ is the same as convergence in the sense of Definition \ref{defn: Omega topology}.
	
	If $V_\alpha+w_\alpha\rightarrow V'+w'\subseteq V+w''$ with $\norm{w''-w}<r$, then we will eventually have $V_\alpha+w_\alpha\subseteq V+P_{V^\perp}w_\alpha$ with $\norm{P_{V^\perp} w_\alpha-w}<r$, showing that $B_r(V+w)$ is open (that is, an element of $\tau_\Omega$).
	
	To show that the set $\overline{B_r}(V+w)$ as defined in \eqref{Br bar} is closed, we choose a net $(V_\alpha+w_\alpha)\subseteq\overline{B_r}(V+w)$ converging to some $V'+w'$. Take the unique $w_\alpha''\in V^\perp$ such that $V_\alpha+w_\alpha\subseteq V+w_\alpha''$ and $\norm{w_\alpha''-w}\leq r$. We will try to find a subnet of $(V_\alpha+w_\alpha)$ that is absorbed in an affine space lying in $V'+w'$. Defining $\tilde{V}:=V'\cap V,$ we already find that eventually $V_\alpha\subset \tilde{V}$. It can be proved (first for $\norm{P_{\tilde{V}^\perp}w_\alpha}=1$, then in general,) that there exists a constant $C$ such that
	\begin{align}\label{eqn: bound with C no0}
		\norm{P_{\tilde{V}^\perp}w_\alpha}\leq C\max(\norm{P_{V'^\perp}w_\alpha},\norm{P_{V^\perp}w_\alpha})\,.
	\end{align}
	To estimate the right-hand-side of \eqref{eqn: bound with C no0}, first observe that $P_{V^\perp}w_\alpha=w_\alpha''$, which is bounded by $r+\norm{w}$. Secondly, observe that $P_{V'^\perp} w_\alpha\rightarrow w'$, so that $(P_{V'^\perp} w_\alpha)$ is eventually bounded. Now \eqref{eqn: bound with C no0} implies that $(P_{\tilde{V}^\perp}w_\alpha)$ has a bounded subnet, and therefore a convergent subnet, denoted by $(P_{\tilde{V}^\perp} w_\beta)\subseteq(P_{\tilde{V}^\perp}w_\alpha)$. This net converges to some $\tilde{w}\in V'+w'$. Hence $(V_\beta+w_\beta)$ is absorbed in $\tilde{V}+\tilde{w}$, and so we must have $\tilde{V}=V'$ and hence $V'\subseteq V$. Because $\norm{P_{V^\perp}w'-w}\leq r$, we find that $V'+w'\in \overline{B_r}(V+w)$. Therefore $\overline{B_r}(V+w)$ is closed, and is the closure of $B_r(V+w)$.
	
	Suppose that $V+w\in U\in\tau_\Omega$. Define $A:=\{V'+w'\subsetneq V+w\}$ and the partially ordered set $I:=\left\{\alpha\subseteq A\mid \#\alpha<\infty\right\}$ with inclusion. We can now prove that there is an $\alpha\in I$ such that 
	\begin{align}\label{eqn: set in U in tau_Omega}
		B_{\frac{1}{1+\#\alpha}}(V+w)\setminus\bigcup_{V'+w'\in\alpha}\overline{B_1}(V'+w')\subseteq U\,,
	\end{align}
	which implies our final claim. Indeed, if there was no such $\alpha$, then we would canonically find a net $(V_\alpha+w_\alpha)_{\alpha\in I}$ outside of $U$ such that every $V_\alpha+w_\alpha$ is in the left-hand-side of \eqref{eqn: set in U in tau_Omega}. It would then easily follow that $V_\alpha+w_\alpha\rightarrow V+w$, giving a contradiction.
\end{proof}

We have a topological embedding $\R^m\hookrightarrow\Omega$ by sending $w\mapsto \{0\}+w$, as a result of Definition \ref{defn: Omega}. This turns out to determine a compactification.

\begin{thm}
	The space $\Omega$ is a compactification of $\R^m$.
\end{thm}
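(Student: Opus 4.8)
The plan is to show that $\Omega$ is compact, Hausdorff, and contains $\R^m$ as a dense open subset via the embedding $w \mapsto \{0\}+w$. Since the embedding and its topological nature are already asserted in the excerpt, the three properties to verify are: (a) $\Omega$ is Hausdorff, (b) $\Omega$ is compact, and (c) the image of $\R^m$ is dense in $\Omega$. Of these, compactness will be the main obstacle.

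\medskip

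\emph{Hausdorffness.} Given distinct $V_1+w_1$ and $V_2+w_2$ in $\Omega$, I would produce disjoint open neighborhoods using the basic open sets $B_r$ from Lemma \ref{lem: Omega convergence and topology}. The cleanest route is via the convergence criterion: I would argue that limits are unique. Suppose a net converges to both $V_1+w_1$ and $V_2+w_2$. By Definition \ref{defn: Omega topology}, absorption in both forces $V_\alpha \subseteq V_1 \cap V_2$ eventually and $P_{V_i^\perp}w_\alpha \to w_i$; the minimality clause (no subnet absorbed in a strictly smaller affine space) then pins down a single affine space, yielding $V_1+w_1 = V_2+w_2$. Since $\tau_\Omega$ is defined so that convergence agrees with Definition \ref{defn: Omega topology}, uniqueness of limits gives Hausdorffness (the topology is first-countable enough, via the explicit neighborhood basis, that net arguments suffice).

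\medskip

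\emph{Density of $\R^m$.} Every $V+w$ is a limit of points of $\R^m$: I would take a net $w_\alpha \in V+w$ (i.e. $w_\alpha = v_\alpha + w$ with $v_\alpha \in V$, $\|v_\alpha\|\to\infty$ ranging over directions) and check that $\{0\}+w_\alpha$ converges to $V+w$ in the sense of Definition \ref{defn: Omega topology}. Absorption is immediate since $P_{V^\perp}w_\alpha = w$; the minimality clause needs that no subnet collapses into a proper affine subspace of $V+w$, which I can arrange by choosing the directions $v_\alpha/\|v_\alpha\|$ to be eventually dense in $S(V)$. This shows $\overline{\R^m} = \Omega$.

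\medskip

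\emph{Compactness — the hard part.} This is where the real work lies. I would prove compactness by showing every net in $\Omega$ has a convergent subnet, using the neighborhood basis from Lemma \ref{lem: Omega convergence and topology}. Given a net $(V_\alpha+w_\alpha)$, I would first pass to a subnet on which $\dim V_\alpha$ is constant (finitely many values in $\{0,\dots,m\}$), then use compactness of the Grassmannian of $d$-planes to make $V_\alpha \to V$ in the ordinary Grassmannian sense. The subtlety is the behavior of the offsets $w_\alpha \in V_\alpha^\perp$: these need not be bounded. The key device, already foreshadowed by the inequality \eqref{eqn: bound with C no0} in the proof of Lemma \ref{lem: Omega convergence and topology}, is to split off the unbounded directions. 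Concretely, I expect to argue inductively on dimension: if $(P_{V^\perp}w_\alpha)$ stays bounded I can extract a limit $w$ and land in $V'+w$ for the appropriate $V' \supseteq V$; if some component of $w_\alpha$ escapes to infinity along a direction $u$, that direction gets \emph{absorbed} into the linear part, enlarging $V$ to $V \oplus \R u$ and reducing the size of the remaining offset, so the induction terminates. Choosing the subnet so that the minimality clause of Definition \ref{defn: Omega topology} holds—i.e. extracting a limit affine space that is as small as possible—then gives genuine convergence rather than mere absorption. The main difficulty is precisely this bookkeeping of which directions of $w_\alpha$ diverge and get promoted into the linear subspace, and verifying that the resulting limit satisfies the no-smaller-subnet condition; the bound \eqref{eqn: bound with C no0} is the quantitative tool that makes the extraction of convergent subnets work.
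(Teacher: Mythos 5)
Your overall architecture (compactness, Hausdorffness, density) is reasonable, and the Hausdorff verification is a worthwhile addition the paper omits (it follows there only later, from the homeomorphism with the Gelfand spectrum); your uniqueness-of-limits argument does go through, provided you use the bound \eqref{eqn: bound with C no0} to extract a subnet absorbed in a common affine subspace of the two putative limits. But both of your main steps contain genuine gaps. For density, ``directions eventually dense in $S(V)$'' does not secure the minimality clause: if a subsequence of directions $v_{n_k}/\norm{v_{n_k}}$ lies in (or approaches sufficiently fast) a proper subspace $\tilde{V}\subsetneq V$, then $P_{\tilde{V}^\perp}(v_{n_k}+w)$ converges and that subsequence is absorbed in $\tilde{V}+P_{\tilde{V}^\perp}w\subsetneq V+w$. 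What you actually need is $\norm{P_{\tilde{V}^\perp}v_n}\rightarrow\infty$ for \emph{every} proper $\tilde{V}\subsetneq V$ simultaneously, which dense directions alone do not provide. The paper avoids this entirely by inducting on $\dim V$: taking $U\subset V$ of codimension one in $V$, $u\in V\cap U^\perp$ and $|t_i|\rightarrow\infty$, the sequence $U+(t_iu+w)$ converges to $V+w$ (a single escaping direction suffices because $U$ already fills the remaining dimensions), so only one dimension is added per step.

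For compactness, the Grassmannian step is the wrong tool. Absorption in $V+w$ requires $V_\alpha\subseteq V$ \emph{eventually}, and ordinary Grassmannian convergence $V_\alpha\rightarrow V$ does not give this: for lines in $\R^2$ rotating toward the $x$-axis (all distinct), the $\Omega$-limit is $\R^2+0$, not the $x$-axis, so the affine space you would build from $\lim V_\alpha$ plus promoted directions of $w_\alpha$ is simply not the limit. Compactness is in fact the \emph{easy} part, by a top-down descent rather than your bottom-up construction: every net is trivially absorbed in $\R^m+0$; if a subnet absorbed in some $V+w$ does not converge to $V+w$, then by Definition \ref{defn: Omega topology} a further subnet is absorbed in some $\tilde{V}+\tilde{w}\subsetneq V+w$, which necessarily has strictly smaller dimension; since the dimension can drop at most $m$ times, the descent terminates at a convergent subnet. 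No Grassmannian compactness, no splitting of bounded and unbounded offset directions, and no use of \eqref{eqn: bound with C no0} is needed for this step.
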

\begin{proof}
	Compactness follows from Definition \ref{defn: Omega topology}. Indeed, to any net $(V_\alpha+w_\alpha)$ we can assign a $V+w\in\Omega$ such that some subnet $(V_\beta+w_\beta)\subseteq(V_\alpha+w_\alpha)$ is absorbed in $V+w$. Either $V_\beta+w_\beta\rightarrow V+w$ or a subsubnet $(V_\gamma+w_\gamma)\subseteq(V_\beta+w_\beta)$ is absorbed in a smaller dimensional affine space. The thus resulting chain of subnets has to stop somewhere, because $\dim V<\infty$, and gives us a convergent subnet of $(V_\alpha+w_\alpha)$.
	
	To show that $\R^m$ is dense in $\Omega$, let $V+w$ be arbitrary, and suppose that every $V'+w'$ with $\dim V'<\dim V$ lies in $\overline{\R^m}$, i.e. the closure of $\R^m$ in $\Omega$. Then we can construct a sequence in $\overline{\R^m}$, converging to $V+w$, as follows. We choose $U\subset V$ with $\dim U=\dim V-1$, some $u\in V\cap U^\perp$, and a sequence $(t_i)\subset\R$ without convergent subsequence. Then $U+t_iu\rightarrow V+w$. Applying induction to the dimension of $V$, it follows that $\overline{\R^m}=\Omega$.
\end{proof}

The topology on $\Omega$ indeed matches the (weak*-)topology on $\Delta$:

\begin{lem}\label{prop:chi embedding}
	The function $\chi:\Omega\rightarrow\Delta$ is an embedding (i.e. a continuous open injection).
\end{lem}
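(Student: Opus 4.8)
The plan is to verify the three defining properties of an embedding separately: $\chi$ is injective, continuous, and open onto its image. For injectivity I would reconstruct the affine space $V+w$ from the character. Writing $h^\lambda_x$ as a levee whose projection is $P_{\spn\{x\}}$ (for $x\neq0$), so that $\ker P_{\spn\{x\}}=x^\perp$, the formula for $\chi$ on levees gives $\chi(V+w)(h^\lambda_x)=1/(i\lambda-x\cdot w)$ when $x\in V^\perp$ (equivalently $V\subseteq x^\perp$) and $\chi(V+w)(h^\lambda_x)=0$ otherwise. Hence $V^\perp=\{x\mid\chi(V+w)(h^1_x)\neq0\}$ recovers $V$, and for $x\in V^\perp$ the relation $x\cdot w=i-1/\chi(V+w)(h^1_x)$ recovers $w$ via nondegeneracy of the inner product on $V^\perp$. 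Therefore $\chi(V+w)=\chi(V'+w')$ forces $V+w=V'+w'$.

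For continuity, suppose $V_\alpha+w_\alpha\to V+w$ in $\Omega$. Since every character has norm $1$ and $\Sr(X)$ is dense in $\Cr$ by Proposition \ref{prop: Finite Sr is dense in Cr}, it suffices to show $\chi(V_\alpha+w_\alpha)(g\circ P)\to\chi(V+w)(g\circ P)$ for each levee $g\circ P$. If $V\subseteq\ker P$, absorption gives eventually $V_\alpha\subseteq V\subseteq\ker P$, so the value equals $g(Pw_\alpha)=g(PP_{V^\perp}w_\alpha)$, which tends to $g(Pw)$ because $P_{V^\perp}w_\alpha\to w$. The delicate case is $V\not\subseteq\ker P$, where the target value is $0$: here I would argue by contradiction. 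If $g(Pw_\beta)$ fails to tend to $0$ along some subnet with $V_\beta\subseteq\ker P$, then since $g\in C_0(\ran P)$ one may pass to a further subnet with $Pw_\delta\to z$ and $g(z)\neq0$. Setting $\tilde V:=V\cap\ker P\subsetneq V$ and using $\tilde V^\perp=V^\perp+\ran P$, the inner products of $w_\delta$ against $V^\perp$ (converging to those of $w$) and against $\ran P$ (converging to those of $z$) both converge, so $P_{\tilde V^\perp}w_\delta\to\tilde w$ for some $\tilde w$ with $\tilde V+\tilde w\subsetneq V+w$. Thus $(V_\delta+w_\delta)$ is a subnet absorbed in a proper affine subspace, contradicting Definition \ref{defn: Omega topology}. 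Hence $g(Pw_\alpha)\to0$ and continuity follows.

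For openness onto the image it is equivalent to prove that $\chi^{-1}$ is continuous, i.e. that weak*-convergence $\chi(V_\alpha+w_\alpha)\to\chi(V+w)$ implies $V_\alpha+w_\alpha\to V+w$. The discussion preceding Definition \ref{defn: Omega} already yields absorption in $V+w$, so what remains is to exclude subnets absorbed in proper subspaces. If some subnet were absorbed in $\tilde V+\tilde w\subsetneq V+w$, I would choose $v\in (V\cap\tilde V^\perp)\setminus\{0\}$ and set $P:=P_{\spn\{v\}}$, so that $\tilde V\subseteq\ker P$ but $V\not\subseteq\ker P$. Along this subnet one eventually has $V_\beta\subseteq\tilde V\subseteq\ker P$, and $Pw_\beta$ converges to $P\tilde w$ because $v\cdot w_\beta=v\cdot P_{\tilde V^\perp}w_\beta\to v\cdot\tilde w$; picking $g\in C_0(\ran P)$ nonzero at that limit gives $\chi(V_\beta+w_\beta)(g\circ P)\not\to0=\chi(V+w)(g\circ P)$, contradicting weak*-convergence of the subnet. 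This rules out proper absorption and establishes $\Omega$-convergence.

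I expect the two contradiction arguments — the continuity case $V\not\subseteq\ker P$ and the openness step excluding proper-subspace subnets — to be the main obstacle, since these are exactly where the refined second clause of the $\Omega$-topology (no subnet absorbed in a proper subspace) must be matched against the $C_0$-decay of the generating functions. The key technical point there is the convergence of the auxiliary projections $P_{\tilde V^\perp}w_\delta$, obtained from the orthogonal decomposition $\tilde V^\perp=V^\perp+\ran P$; the easier case $V\subseteq\ker P$ and the algebraic injectivity computation should both be routine.
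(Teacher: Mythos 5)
Your proof is correct and follows the same overall strategy as the paper: injectivity by evaluating the characters on suitable levees, and both directions of net-convergence by testing on levees and deriving a contradiction with the subnet clause of Definition \ref{defn: Omega topology}. The only substantive difference is in the hard continuity case $V\nsubseteq\ker P$, where the paper invokes the neighborhood basis $B_1(V+w)\setminus\overline{B_R}(\ker P)$ from Lemma \ref{lem: Omega convergence and topology}, whereas you give a self-contained subnet argument via compactness of $\{|g|\geq\epsilon\}$ and the decomposition $\tilde V^\perp=V^\perp+\ran P$; both are valid, and your version trades reliance on that lemma for reproving a piece of its content directly.
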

\begin{proof}
	We begin with injectivity. Let $\chi(V+w)=\chi(V'+w')$ for some $V+w,~V'+w'\in\Omega$. Take a projection $P$ onto $V^\perp$ and take a $g\in C_0(V^\perp)$ with $g(w)=1$, and $g(v)<1$ for all $v\neq w$. Now
		$$\chi(V'+w')(g\circ P)=\chi(V+w)(g\circ P)=1\,,$$
	so $V'\subseteq V$ and $g(Pw')=1$. By symmetry we obtain $V'=V$, and therefore $g(w')=1$. It follows that $V+w=V'+w'$.
	
	 We are left to check that the maps $\chi$ and $\chi^{-1}\!:\!\chi(\Omega)\rightarrow\Omega$ preserve convergence of nets.
	 
	Suppose $\chi(V_\alpha+w_\alpha)\rightarrow\chi(V+w)$. As already discussed, $V_\alpha+w_\alpha$ is absorbed in $V+w$. Let $(V_\beta+w_\beta)$ be a subnet that is absorbed in $\tilde{V}+\tilde{w}\subsetneq V+w$. Take a levee $f=g\circ P_{\tilde{V}^\perp}$, where $g(\tilde{w})=1$, so
	$$\lim_\beta\chi(V_\beta+w_\beta)(f)=\lim_\beta g(P_{\tilde{V}^\perp}w_\beta)=1\neq0=\chi(V+w)(f).$$
	This contradicts $\chi(V_\alpha+w_\alpha)\rightarrow\chi(V+w)$. We conclude that $V_\alpha+w_\alpha\rightarrow V+w$.
	
	Suppose conversely that $V_\alpha+w_\alpha\rightarrow V+w$. It is sufficient to prove that
		$$\chi(V_\alpha+w_\alpha)(g\circ P)\rightarrow\chi(V+w)(g\circ P)\,,$$
	for an arbitrary levee $g\circ P\in \Cr$. If $V\subseteq \ker P$, then this follows from a simple computation.
	If $V\nsubseteq\ker P$, then it remains to show that $\chi(V_\alpha+w_\alpha)(g\circ P)$ converges to zero. In the notation of Lemma \ref{lem: Omega convergence and topology}, we eventually have 
	\begin{align}\label{B_R}
		V_\alpha+w_\alpha\in B_1(V+w)\setminus \overline{B_R}(\ker P)
	\end{align}
	for arbitrarily large $R$. Since \eqref{B_R} implies either $V_\alpha\nsubseteq\ker P$ or $\norm{Pw_\alpha}>R$, we find that $\chi(V_\alpha+w_\alpha)(g\circ P)\rightarrow0$, so we are done.
\end{proof}

\begin{thm}\label{thm: Gelfand Spectrum}
	The Gelfand spectrum of the commutative resolvent algebra $\Cr(\R^m)$ is homeomorphic to $\Omega$, i.e. $\Delta(\Cr(\R^m))\cong\Omega$ via the map $\chi$.
\end{thm}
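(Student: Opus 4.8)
The plan is to lean entirely on Lemma \ref{prop:chi embedding}, which already establishes that $\chi$ is a continuous open injection, i.e.\ a homeomorphism of $\Omega$ onto its image $\chi(\Omega)\subseteq\Delta$. Consequently the whole theorem reduces to the single remaining point that $\chi$ is \emph{surjective}: once $\chi(\Omega)=\Delta$ is known, $\chi$ is automatically the asserted homeomorphism (a continuous open bijection, or equivalently a continuous bijection from the compact space $\Omega$ to the compact Hausdorff space $\Delta$). Note that $\Delta$ is compact Hausdorff because $\Cr(\R^m)$ is a unital commutative C*-algebra, and $\Omega$ is compact since we have just seen it is a compactification of $\R^m$.

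First I would record that $\chi(\Omega)$ is closed in $\Delta$: it is the continuous image of the compact set $\Omega$, hence compact, hence closed in the Hausdorff space $\Delta$. Suppose, for contradiction, that $\chi(\Omega)\neq\Delta$. By the Gelfand isomorphism $\Cr(\R^m)\isom C(\Delta)$, closed subsets of $\Delta$ correspond bijectively and order-reversingly to closed ideals, with the whole space $\Delta$ corresponding to the zero ideal. Since $\chi(\Omega)$ is a \emph{proper} closed subset, the ideal of functions whose Gelfand transform vanishes on $\chi(\Omega)$ is nonzero, so there exists a nonzero $f\in\Cr(\R^m)$ with $\chi(V+w)(f)=0$ for every $V+w\in\Omega$.

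The contradiction will come from evaluating $f$ on the embedded copy of $\R^m$. For each $w\in\R^m$ take $V=\{0\}$ in \eqref{f^Vw}; using the convention $S(\{0\})=\{0\}$ we get $f^{\{0\},w}(0)=\lim_{t\to\infty}f(w)=f(w)$, so that $\chi(\{0\}+w)$ is nothing but point evaluation, $\chi(\{0\}+w)(f)=f(w)$. Since $\{0\}+w\in\Omega$ and $f$ vanishes on all of $\chi(\Omega)$, this forces $f(w)=0$ for every $w\in\R^m$. As $\Cr(\R^m)\subseteq C_b(\R^m)$, this means $f=0$, contradicting $f\neq0$. Hence $\chi$ is surjective, and the theorem follows.

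Given the prior results, there is no deep obstacle here; the essential observation—the one point that makes the argument work rather than merely formal—is the identification of the characters $\chi(\{0\}+w)$ with point evaluations at $w\in\R^m$. It is precisely this fact that lets the embedded points of $\R^m$ separate $\Cr(\R^m)$ strongly enough to force any function vanishing on $\chi(\Omega)$ to vanish identically, thereby upgrading the embedding of Lemma \ref{prop:chi embedding} to a homeomorphism onto the full spectrum.
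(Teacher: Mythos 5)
Your proof is correct, and it reaches the conclusion by a slightly different route than the paper. The paper works on the function-algebra side: it considers the pullback $\chi^*:C(\Delta)\rightarrow C(\Omega)$, notes it is a *-homomorphism, checks injectivity (which amounts to exactly your point-evaluation observation $\chi(\{0\}+w)(f)=f(w)$), and then proves surjectivity of $\chi^*$ by extending $g\circ\chi^{-1}$ from the compact set $\chi(\Omega)$ to all of $\Delta$ via Urysohn/Tietze. You instead stay on the space side: Lemma \ref{prop:chi embedding} already gives a homeomorphism of $\Omega$ onto $\chi(\Omega)$, so everything reduces to surjectivity of $\chi$, which you get from compactness of $\chi(\Omega)$ plus the closed-set/ideal correspondence and the fact that the characters $\chi(\{0\}+w)$ are point evaluations, so that a Gelfand transform vanishing on $\chi(\Omega)$ forces $f=0$ in $C_b(\R^m)$. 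The two arguments pivot on the same essential fact (the embedded copy of $\R^m$ is dense in $\Delta$ in the sense that point evaluations separate $\Cr(\R^m)$); your version buys a small economy in that the Urysohn extension step becomes unnecessary because the openness of $\chi$ is used directly, while the paper's version packages the result as a C*-isomorphism $C(\Delta)\cong C(\Omega)$, which is the dual formulation of the same statement. Both are complete given the preceding lemmas.
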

\begin{proof}
	This relies on Lemma \ref{prop:chi embedding}. Continuity of $\chi$ implies that its pullback,
		$$\chi^*:C(\Delta)\rightarrow C(\Omega),\qquad f\mapsto f\circ\chi\,,$$
	is a *-homomorphism. \iffalse{We are left to show injectivity and surjectivity of $\chi^*$. Suppose $\chi^*(\hat{f})=0$ for some $\hat{f}\in C(\Delta)$, which is the Gelfand representation of $f\in \Cr(\R^m)$. For all $w\in\R^m$ we have
	\begin{align*}
		0=\chi^*(\hat{f})(0+w)=\chi(0+w)(f)=f(w).
	\end{align*}
	Hence $\chi^*$ is injective.}\else{As injectivity can be straightforwardly checked, we are left to show surjectivity of $\chi^*$. }\fi If $g\in C(\Omega)$, then $ g\circ\chi^{-1}\in C(\chi(\Omega))$ by Lemma \ref{prop:chi embedding}. Since $\chi(\Omega)$ is a compact subset of the compact Hausdorff space $\Delta$, we may use Urysohn's lemma to extend $g\circ\chi^{-1}$ to $\Delta$. We obtain a function $h\in C(\Delta)$ such that $h\circ\chi=g$, completing the proof.
\end{proof}

\noindent \textbf{Acknowledgements}\\
~\\
I would like to thank Detlev Buchholz for formulating the question that initiated this paper, and Klaas Landsman for his great supervision along the way. For proofreading and/or crucial help I am also much indebted to Karl-Henning Rehren, Marc Rieffel, Arnoud van Rooij, Ruben Stienstra, Walter van Suijlekom, and Marjolein Troost. 

I am greatly indebted to Lorenzo Pettinari for finding two mistakes in arXiv version 1 of this article, which have been fixed with his help in this arXiv version, version 2.\footnote{See also the corrigendum to the published version [T. D. H. van Nuland (2019). Quantization and the Resolvent Algebra. \textit{J. Funct. Anal.} \textbf{277}, issue 8, pages 2815--2838].} He should be credited for the better part of the proof of Theorem \ref{thm:Berezin}.

This work was supported in part by NWO Physics Projectruimte (680-91-101).


\end{document}